\pgfplotsset{compat=1.16}
\tikzset{
  mid arrow/.style={postaction={decorate,decoration={
    markings,
    mark=at position 0.5 with {\arrow[scale=1.5]{stealth}}
  }}}
}
\begin{document}

\catchline{}{}{}{}{} % Publisher's Area please ignore

\markboth{Yu.A. Kuznetsov and J. Hooyman}{Bifurcations of heteroclinic contours in two-parameter planar systems}

\title{Bifurcations of heteroclinic contours in two-parameter planar systems: {O}verview and explicit examples}

\author{Yuri A. Kuznetsov and Joost Hooyman}
\address{Mathematical Institute, Utrecht University,\\
P.O. Box 80010, 3508 TA  Utrecht,\\ The Netherlands}

\maketitle

\begin{history}
\received{(to be inserted by publisher)}
\end{history}

\begin{abstract}
    Smooth planar vector fields containing two hyperbolic saddles may possess contours formed by heteroclinic connections between these saddles. We present an overview of the bifurcations of these contours based on papers by J.W. Reyn and A.V. Dukov.  Additionally, two new explicit polynomial systems containing such contours are derived, which are studied using the bifurcation software {\sc matcont} and are shown to exhibit the theoretically predicted phenomena, including series of heteroclinic connections.
\end{abstract}

\keywords{Heteroclinic contours, global bifurcations, planar systems}

\begin{multicols}{2}
\allowdisplaybreaks
\section{Introduction} \label{Sec1}
%...................................
Consider a vector field in the plane containing two hyperbolic saddles and suppose an orbit leaving one saddle connects to the other saddle and vice versa. This configuration of saddles and orbits is called a {\it heteroclinic contour} (other names used: {\it polygon}, {\it polycycle}). This paper deals with bifurcations occurring in generic two-parameter families of vector fields containing such contours at some critical parameter values.

Planar heteroclinic contours can be subdivided into two categories, namely {\it monodromic} and {\it non-monodromic}. For both cases there already exist detailed theoretical studies. Results on the monodromic case have been first published by  \cite{reyn} and are quite well known. For example, they are included in the surveys  \cite{BazKuzKhib:1989,Sha:1992} and reproduced in the encyclopedic work \cite{ArnoldDSV:1994}.
In contrast, results on the non-monodromic case are much less known, although its detailed study has been first done in \cite{Roi:1989} and  \cite{Sha:1992}, and recently independently repeated by \cite{dukov}.

In both cases, depending on the eigenvalues of the saddles, either one limit cycle or two limit cycles of opposite stability are generated as the result of homoclinic bifurcations. When two limit cycles exist these may collide to form a semi-stable limit cycle at a fold (or saddle-node) of cycles bifurcation.

The non-monodromic case exhibits an additional phenomenon of {\it flashing} (or {\it sparkling}) heteroclinic connections, which may be described as an infinite series of bifurcation curves corresponding to heteroclinic connections from one saddle to the other making arbitrarily many windings around one of the saddles. This phenomenon was already discovered in 1981 by  \cite{maltapalis} but in a less general (one-parameter) context. Two-parameter perturbation of the non-monodromic case provides an example of a situation which is only two-dimensional but nevertheless possesses a bifurcation diagram of considerable complexity, where an infinite number of bifurcation curves emanate from a codim 2 point, see \cite{Roi:1989,Sha:1992,dukov}. Notice, however, that a possibility of such parametric complexity near a non-monodromic contour was also briefly mentioned in \cite{BazKuzKhib:1989}.

The first aim of this paper is to collect the results of all mentioned papers and present them in a unified manner by describing generic bifurcation diagrams in both the monodromic and the non-monodromic cases, including phase portraits (cf. \cite{Sha:1992}).

The second aim of this paper is to derive two polynomial systems which contain heteroclinic connections of the monodromic and non-monodromic kind respectively. These systems may be perturbed to bifurcate as predicted in the theoretical part.
The method to obtain the monodromic example is adap\-ted from a method to derive a polynomial system containing a homoclinic connection by  \cite{sandstede}. The non-monodromic example is derived by modifying a reversible system containing a homoclinic connection. After these explicit polynomial systems are obtained we use the standard numerical software package {\sc matcont} \cite{matcont:2003,matcont:2008,matcontHOM:2012} to continue relevant phase objects in two parameters, thus numerically producing bifurcation diagrams of these systems.

In Appendix A, we summarize for reader's convenience the major facts on flashing heteroclinic bifurcations, assuming that basic facts on codim 1 bifurcations of cycles and connecting orbits of planar vector fields are known (see, e.g. \cite{Ku:2004}). In Appendix \ref{app:melnikov} we show that the splitting of the heteroclinic connections in the monodromic example is regular.
\par\medskip\noindent
{\bf Acknowledgments:}~~The authors would like to thank A.V. Dukov (Moscow State University) for his useful comments on a draft of this paper. Furthermore, the authors are thankful to A.L. Shilnikov (Georgia State University, Atlanta) and L.M. Lerman (State University of Nizhny Novgorod) for informing about early works on bifurcations of non-monodromic heteroclinic contours.

\section{Heteroclinic contours of codim 2} \label{Sec2}
%........................................................
Consider a planar vector field
\begin{equation}
\left\{\begin{array}{rcl}
\dot{x}&=&f(x,y),\\
\dot{y}&=&g(x,y),
\end{array}
\right.
\label{eq:planarsystem}
\end{equation}
where \(f\) and \(g\) are smooth functions from \(\mathbb{R}\times \mathbb{R}\) to \(\mathbb{R}\).
Assume the system (\ref{eq:planarsystem}) has two hyperbolic equilibria of saddle type which we label \(L\) and \(M\). Let $\lambda_s<0<\lambda_u$ and $\mu_s<0<\mu_u$ be the eigenvalues of $L$ and $M$, respectively. Introduce the corresponding \emph{saddle indices} by
$$
\lambda:=-\frac{\lambda_s}{\lambda_u},~~\mu:=-\frac{\mu_s}{\mu_u}.
$$
Also suppose that this system has solutions \(\gamma_1(t)\) satisfying
\[\lim_{t\to-\infty}\gamma_1(t)=L\quad\text{and}\quad\lim_{t\to\infty}\gamma_1(t)=M,\]
and \(\gamma_2(t)\) satisfying
\[\lim_{t\to-\infty}\gamma_1(t)=M\quad\text{and}\quad\lim_{t\to\infty}\gamma_1(t)=L.\]
We see that the orbits of \(\gamma_1(t)\) and \(\gamma_2(t)\) are heteroclinic connections between \(L\) and \(M\) in opposite directions. Together with the saddles they form a heteroclinic contour.
\begin{figure}[H]
  \centering
  \subfloat[Monodromic]{\includegraphics[scale=0.9]{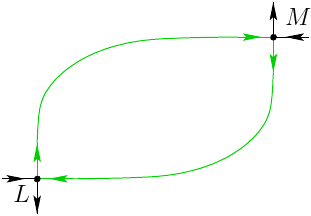}}
  \subfloat[Non-monodromic]{\includegraphics[scale=0.9]{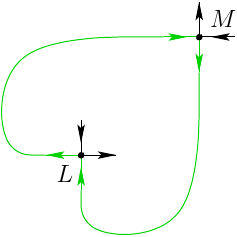}}
  \caption{The two types of heteroclinic contours}
  \label{fig:hetcontours}
\end{figure}
Depending on the position of the stable and unstable manifolds of \(L\) and \(M\) there are two distinct types of heteroclinic contours called \emph{monodromic} and \emph{non-monodromic}, see Fig. \ref{fig:hetcontours}.

To understand this choice of terminology, note that the monodromic contour may be approached as a limit set by solutions inside the contour. It is also possible to contain all stable and unstable manifolds inside the contour, in which case the contour may be approached as a limit set for solutions outside the contour. This case is also monodromic but is topologically equivalent to the depicted one and, thus, will not be considered. In the non-monodromic case, the contour can not be approached as a limit set, neither from the inside nor the outside.

We shall see that the bifurcation diagrams in both cases depend strongly on the values of \(\lambda\), \(\mu\) and also \(\lambda\mu\). Especially, it is important whether these quantities are smaller or larger than 1. Recall that saddles with index greater than one are called \emph{ dissipative}. In principle, the following generic subcases are possible:

\begin{table}[H]
\centering
\begin{tabular}{|l|lll|}
\hline
1 & \(\lambda<1\) & \(\mu>1\) & \(\lambda\mu<1\) \\ \hline
2 & \(\lambda<1\) & \(\mu>1\) & \(\lambda\mu>1\) \\ \hline
3 & \(\lambda>1\) & \(\mu<1\) & \(\lambda\mu>1\) \\ \hline
4 & \(\lambda>1\) & \(\mu<1\) & \(\lambda\mu<1\) \\ \hline
5 & \(\lambda>1\) & \(\mu>1\) & \(\lambda\mu>1\) \\ \hline
6 & \(\lambda<1\) & \(\mu<1\) & \(\lambda\mu<1\) \\ \hline
\end{tabular}
\end{table}

However, reversing time and interchanging the roles of \(L\) and \(M\), we see that there are only two essentially different subcases to consider. More speci\-fi\-cally, reversing time, we see that 1 goes to 3 and 5 to 6. When changing the roles of \(L\) and \(M\), 1 goes to 4 and 2 to 3. Thus if we study only the subcases 1 and 6, the results for all others follow immediately. To conclude, we consider only subcases \(\lambda<1\), \(\mu<1\) and \(\lambda<1\), \(\mu>1\) with \(\lambda\mu<1\) in both of them.

\section{Bifurcations of heteroclinic contours} \label{Sec3}
%........................................................
We are now ready to describe bifurcations of planar monodromic and non-monodromic heteroclinic contours in generic two-parameter systems. Consider a smooth vector field
\begin{equation}
\left\{\begin{array}{rcl}
\dot{x}&=&f(x,y,\alpha),\\
\dot{y}&=&g(x,y,\alpha),
\end{array}
\right.
\label{eq:planarsystempar}
\end{equation}
where \(f\) and \(g\) are smooth functions from \(\mathbb{R} \times \mathbb{R} \times \mathbb{R}^2\) to \(\mathbb{R}\). We assume that at $\alpha=(0,0)$ the system (\ref{eq:planarsystempar}) has a monodromic or non-monodromic heteroclinic contour described in Sec. \ref{Sec2}. Each case is treated separately.

\subsection{Bifurcation diagrams in the monodromic cases}
\label{sec:monodromic}
In a neighborhood of \(L\), we introduce two cross-sections, \(\Sigma_L\) and \(\Pi_L\), on the incoming and outgoing heteroclinic connection respectively, see Fig. \ref{fig:monocontour}. On these cross sections, we introduce coordinates \(\xi_L\) and \(\eta_L\), again respectively. We define these coordinates in such a way that the points \(\xi_L=0\) and \(\eta_L=0\) coincide with the points on the intersection of the stable and unstable manifolds and the corresponding cross-section, and such that positive values of the coordinates correspond with points inside the heteroclinic contour.

\begin{figure}[H]
    \centering
    \includegraphics[scale=1.0]{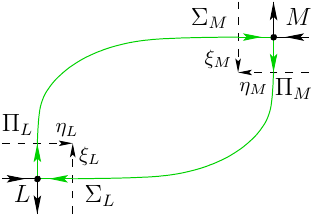}
    \caption{The monodromic contour with all cross-sections and coordinates}
    \label{fig:monocontour}
\end{figure}

Analogously, in a neighborhood \(U_M\) of \(M\), we introduce cross-sections \(\Sigma_M\) and \(\Pi_M\) furnished with coordinates \(\xi_M\) and \(\eta_M\) with analogous properties.

Using the cross-sections, we define the splitting parameters \(\beta=(\beta_1,\beta_2)\) which measure the splitting of the heteroclinic connection along \(\Sigma_L\) and \(\Sigma_M\) respectively.

Summarising the results for both monodromic subcases, we may now sketch their bifurcation diagrams in  Figs. \ref{fig:mono1bifdiag} and \ref{fig:mono2bifdiag}. We denote the curves corresponding to homoclinic connections \(P_L\) and \(P_M\). In the second subcase, the curve in the parameter plane corresponding to the semi-stable (double) limit cycle will be denoted by \(F\). Curves corresponding to heteroclinic connections between the saddles are denoted with the letter \(H\) and are labeled by the saddle from which they leave. Thus the curve corresponding to a heteroclinic connection from \(L\) to \(M\) is labeled with \(H_L\).

\end{multicols}

\begin{figure}[!htb]
  \centering
  \includegraphics[scale=0.55]{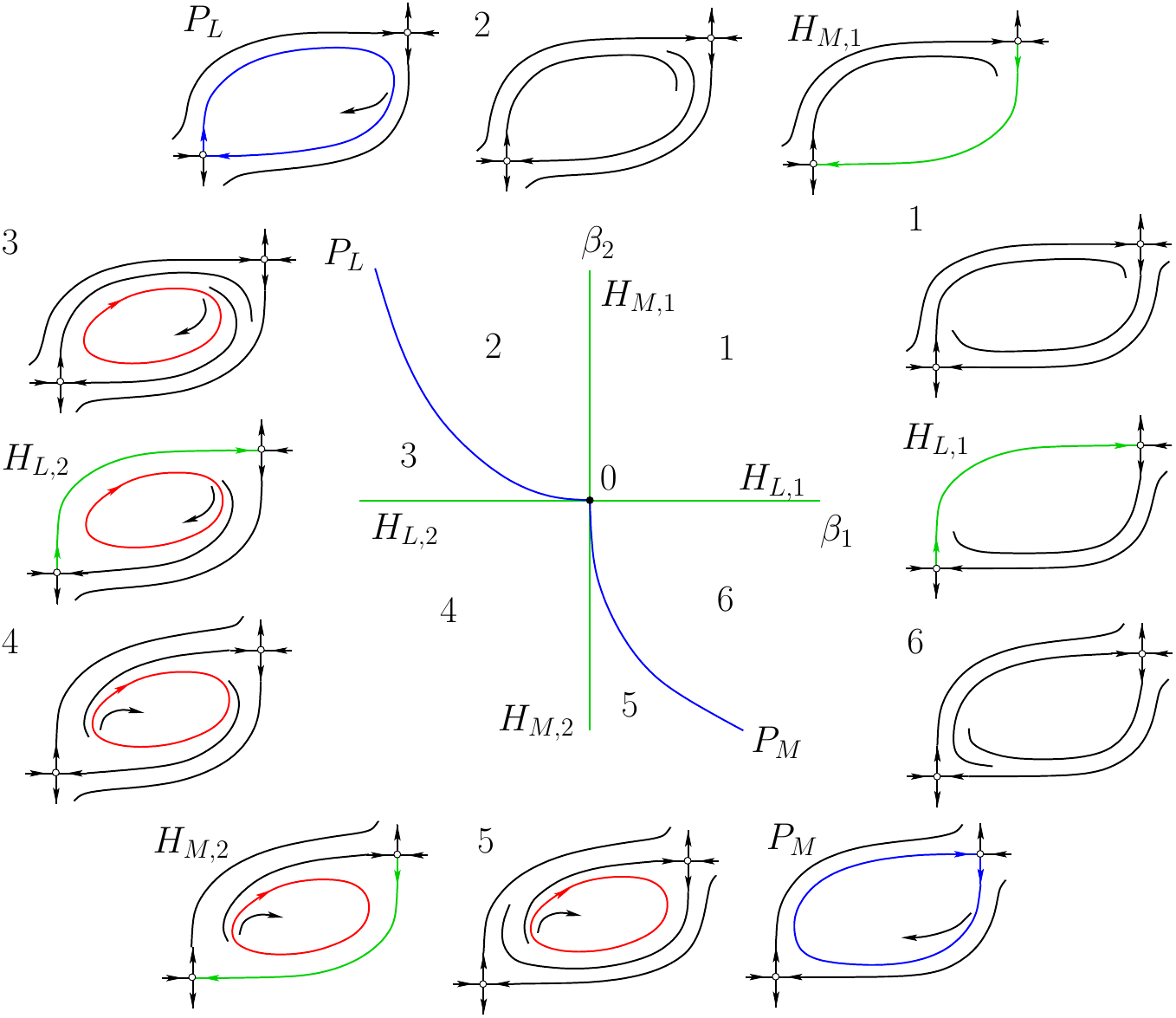}
  \caption{\(\lambda<1\) and \(\mu<1\)}
  \label{fig:mono1bifdiag}
\end{figure}

\begin{figure}[!htb]
  \centering
  \includegraphics[scale=0.55]{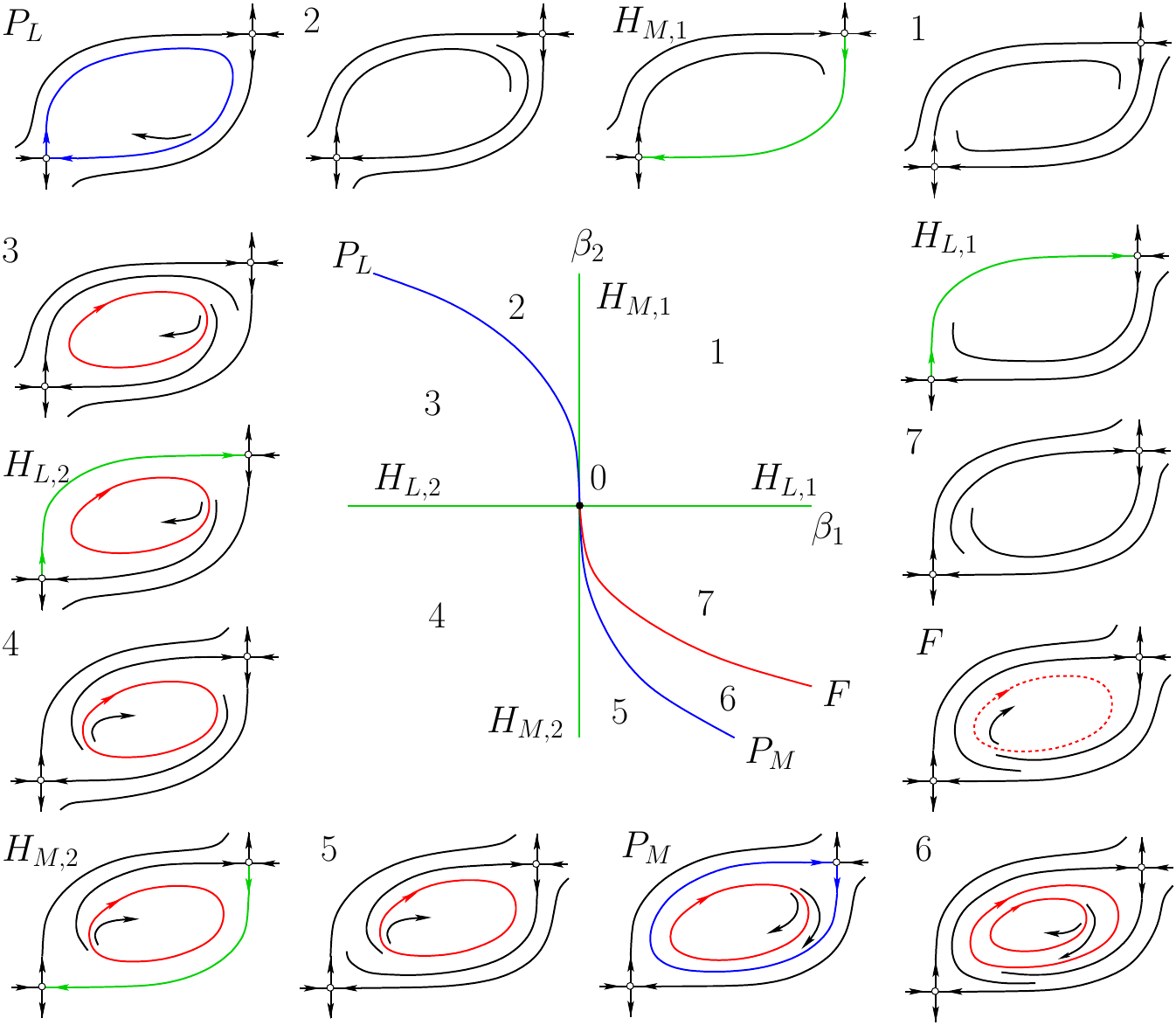}
  \caption{\(\lambda<1\) and \(\mu>1\)}
  \label{fig:mono2bifdiag}
\end{figure}

\begin{multicols}{2}
\allowdisplaybreaks

Choosing initial parameter values on the curve \(P_L\) and varying the parameters through the bifurcation diagram in Fig. \ref{fig:mono1bifdiag} counterclockwise, we observe the following changes of the phase portrait in the first subcase. We begin with a small homoclinic connection of the saddle \(L\). This homoclinic connection is broken outwards and an unstable limit cycle is generated in accordance with the fact that \(\lambda<1\). Then, a heteroclinic connection from \(L\) to \(M\) is formed and destroyed, and thereafter the same happens to a heteroclinic connection from \(M\) to \(L\). Eventually, the unstable limit cycle disappears via a small homoclinic connection of \(M\), which is then broken inwards. This happens in accordance with  \(\lambda<1\). Under further variation of the parameters, heteroclinic connections from \(L\) to \(M\) and back are formed and destroyed before the original homoclinic connection of \(L\) is regained. In short, one might describe these changes as an interplay between two homoclinic bifurcations.

In the second subcase, we perform the same analysis. Starting with parameter values on \(P_L\) and varying them counterclockwise through the bifurcation diagram in Fig. \ref{fig:mono2bifdiag}, we begin with a small homoclinic connection of \(L\) which is destroyed forming an unstable limit cycle. This behaviour is not different from the first subcase, which was to be expected as it still holds that \(\lambda<1\). The evolution does not differ from the first subcase until a small homoclinic connection of \(M\) is formed. In this process, the unstable limit cycle does not disappear but is contained within the homoclinic connection. This happens in accordance with the fact that \(\mu>1\). Furthermore, when this connection is broken inwards another limit cycle, now stable, is generated enclosing the original unstable one.

Eventually, these limit cycles collide, forming a single double limit cycle which is stable from the outside but unstable from the inside. This semi-stable and nonhyperbolic limit cycle is immediately destroyed and further variation of the parameters only creates heteroclinic connections from \(L\) to \(M\) and back, before the original homoclinic connection of \(L\) is again formed. In short, one might describe these changes as the interplay between two homoclinic bifurcations resulting in a fold of cycles bifurcation.

The above results can be obtained by considering a {\it model Poincar\'{e} map} that is constructed as follows. We may define approximate singular maps \(\Delta_L:\Sigma_L\to\Pi_L\) and \(\Delta_M:\Sigma_M\to\Pi_M\) by
\[\Delta_L(\xi_L)=\xi_L^\lambda\quad\text{and}\quad\Delta_M(\xi_M)=\xi_M^\mu\]
near the saddles and truncated regular maps \(Q:\Pi_L\to\Sigma_M\) and \(R:\Pi_M\to\Sigma_L\) by
\[Q(\eta_L)=\beta_2+\theta_1\eta_L\quad\text{and}\quad R(\eta_M)=\beta_1+\theta_2\eta_M,\]
where \(\theta_{1,2}\) are positive constants, near the heteroclinic connections. By composing these maps we arrive at a model Poincaré map \(P:\Sigma_L\to\Sigma_L\), which becomes
\begin{equation}\label{eq:monodromicpoincaremap}
    \begin{split}
        P(\xi_L)&=(R\circ\Delta_M\circ Q\circ\Delta_L)(\xi_L)\\
        &=\beta_1+\theta_2(\beta_2+\theta_1\xi_L^\lambda)^\mu.
    \end{split}
\end{equation}
It has been shown by \cite{reyn} that taking into account higher-order terms in the singular and regular maps in (\ref{eq:monodromicpoincaremap}) does not change topology of the bifurcation diagrams in both subcases.

\graphicspath{{fig/part2/non-monodromic/graphs/case1/}{fig/part2/non-monodromic/graphs/case2/}{fig/part2/non-monodromic/portraits/}}

\subsection{Bifurcation diagrams in the non-monodromic cases}
\label{sec:nonmonodromic}
We now proceed to the non-monodromic case. The treatment will be analogous to that of the monodromic case. Thus, we begin by describing homoclinic connections, then move to limit cycles and their bifurcations, and finally to heteroclinic connections.

The construction of the model maps in the non-monodromic case is virtually the same as in the monodromic case. However, to obtain a singular map describing the behaviour of a solution near the saddle, the cross-sections at \(L\) must be oriented in the opposite direction to allow solutions to pass near the saddles on the positive side of the cross-sections.

The new configuration of the cross sections is pictured in Fig. \ref{fig:nonmonocontour}. Consequently, the splitting parameters at \(L\) are now also measured differently.

The bifurcation diagrams in the two non-monodromic cases are sketched in Figs. \ref{fig:nonmono1bifdiag} and \ref{fig:nonmono2bifdiag}. Once more, we will be able to find two curves, \(P_L\) and \(P_M\), corresponding to homoclinic connections at \(L\) and \(M\) which border a {\it wedge} in the parameter-plane.

\begin{figure}[H]
    \centering
    \includegraphics[scale=1.0]{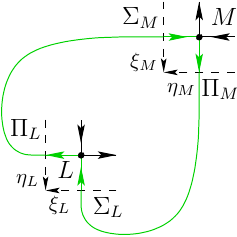}
    \caption{The non-monodromic contour with all cross-sections and coordinates}
    \label{fig:nonmonocontour}
\end{figure}

In the first subcase (\(\lambda<1\) and \(\mu<1\)) we find that outside the wedge no limit cycles are present. If we continuously vary our parameters in this region, we can enter the wedge by crossing its border either through \(P_L\) or \(P_M\). Breaking either one of the associated homoclinic connections generates an unstable limit cycle via a homoclinic bifurcation. This limit cycle exists in the wedge between \(P_L\) and \(P_M\). If we vary the parameters further, a homoclinic connection of the opposing saddle is created and then destroyed, also destroying the unstable limit cycle.

In the second subcase (\(\lambda<1\) and \(\mu>1\)) the wedge between \(P_L\) and \(P_M\) still contains an unstable limit cycle that is generated by breaking the homoclinic connection of \(L\). Varying the parameters further, a homoclinic connection appears for \(M\), now with \(\mu>1\). As a result of this difference, breaking of the this connection does not destroy the unstable limit cycle, but generates an additional stable limit cycle enclosing it. Eventually, these limit cycles collide, momentarily forming a nonhyperbolic semi-stable double limit cycle at a cyclic fold bifurcation at the curve $F$.

We recognize that all these results are similar to the monodromic case. However, the non-monodromic case also presents a new phenomenon: non-trivial heteroclinic connections, i.e. heteroclinic connections which appear when both splitting parameters are non-zero. These connections must wind around \(L\) at least once before reaching \(L\) or \(M\). Actually, in both subcases, there exist two infinite series of curves corresponding to heteroclinic connections between the saddles, with each connection making an increasing number of turns. Thus, we encounter the phenomenon of \emph{flashing heteroclinic connections}, both at the bifurcation of a semi-stable limit cycle as well as at a homoclinic connection (see Appendix A). In the parameter plane, the infinite series of curves corresponding to these non-trivial heteroclinic connections accumulate, in the first subcase at \(P_L\) and \(P_M\), corresponding to the homoclinic connections, and in the second subcase at \(P_L\) and \(F\).

Summarising the results by \cite{dukov} on both non-monodromic subcases, we present the bifurcation diagrams in Figs. \ref{fig:nonmono1bifdiag} and \ref{fig:nonmono2bifdiag}. The curves corresponding to heteroclinic connections are indicated along with the number of turns around the saddle \(L\), shown as a bracketed superscript. For practical reasons, we only supply phase portraits for heteroclinic connections making at most one turn. The accumulating curves corresponding to the connections making more turns are drawn dashed.

\end{multicols}

\begin{figure}[!htb]
  \centering
  \includegraphics[scale=0.7]{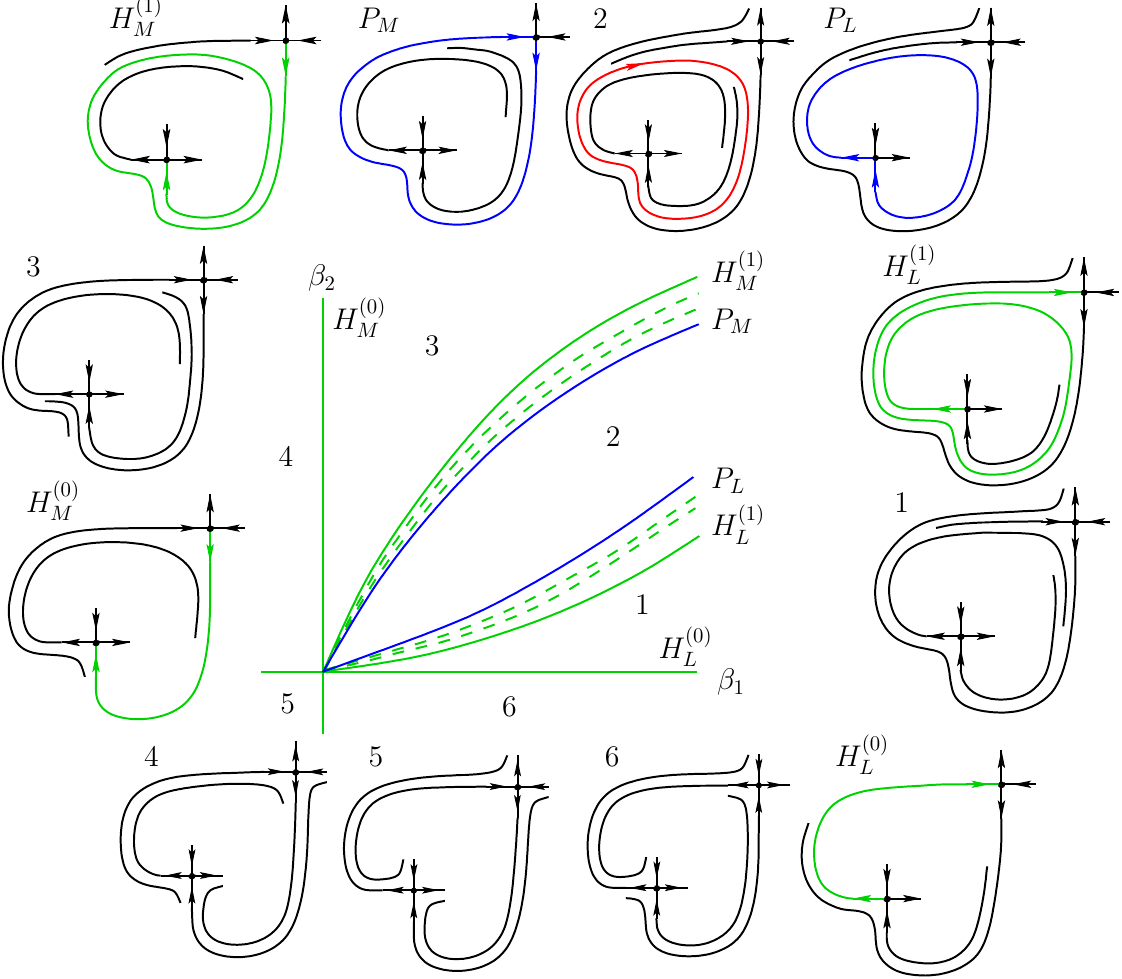}
  \caption{\(\lambda<1\) and \(\mu<1\)}
  \label{fig:nonmono1bifdiag}
\end{figure}

\begin{figure}[!htb]
  \centering
  \includegraphics[scale=0.7]{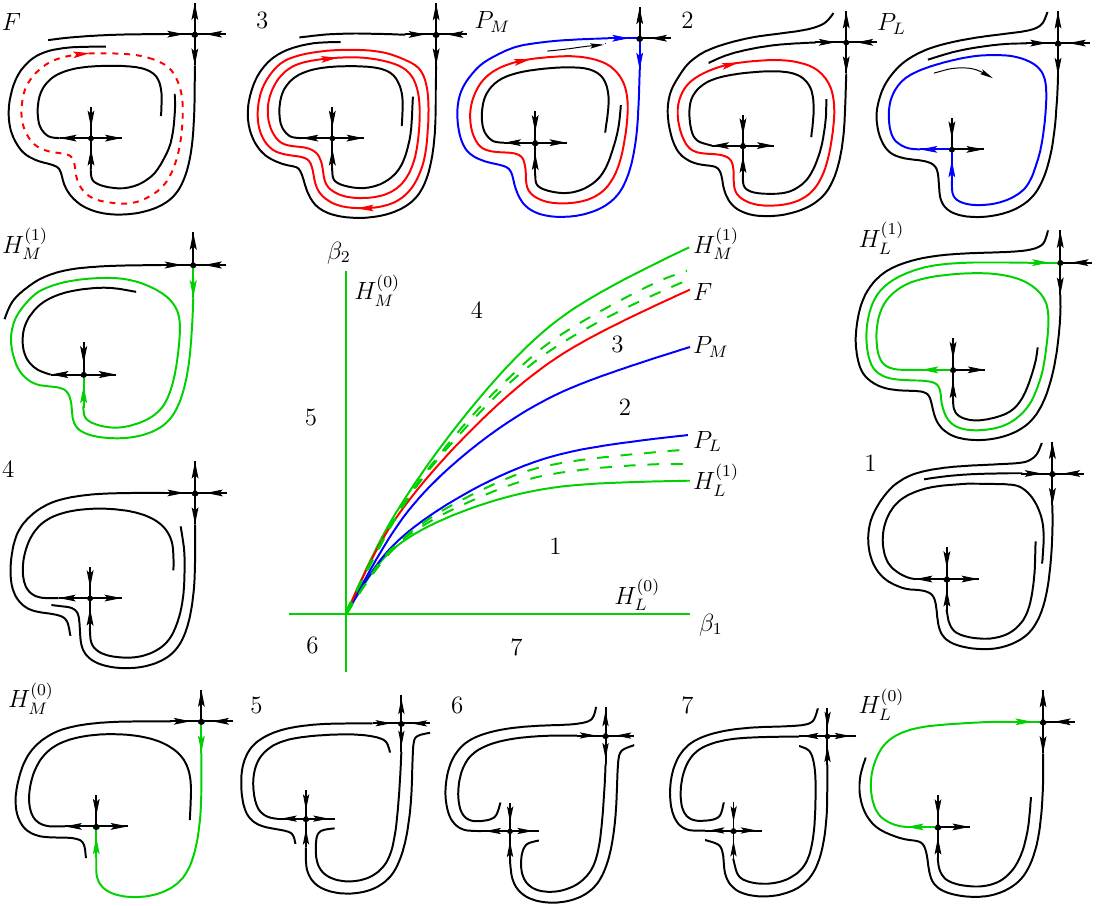}
  \caption{\(\lambda<1\) and \(\mu>1\)}
  \label{fig:nonmono2bifdiag}
\end{figure}

\begin{multicols}{2}

Choosing parameter values on the curve \(H_L^{(0)}\) and varying the parameters through the bifurcation diagram in Fig. \ref{fig:nonmono1bifdiag} counterclockwise, we observe the following changes of the phase portrait. Initially, there is a heteroclinic connection from \(L\) to \(M\) which is destroyed. After variation of the parameters, a second heteroclinic connection is formed which winds around the saddle \(L\) once before reaching \(M\). Varying the parameters further results in an infinite series of flashing heteroclinic connections from \(L\) to \(M\) making an increasing number of turns around \(L\) before eventually a big homoclinic connection of \(L\) is formed. This big homoclinic connection is broken inwards, generating an unstable limit cycle in accordance with the fact that \(\lambda<1\). This limit cycle disappears when a small homoclinic connection of \(M\) is formed, in accordance with \(\mu<1\). Breaking this homoclinic connection results again in an infinite series of flashing heteroclinic connections, now from \(M\) to \(L\) and making a decreasing number of turns. After further variation of the parameters the original heteroclinic connection from \(L\) to \(M\) is regained.

In short, the changes of this situation may be described as an interplay between two homoclinic bifurcations in the presence of two series of flashing heteroclinic connections.

The analysis of the second subcase (see Fig. \ref{fig:nonmono2bifdiag}) yields the same result up to the appearance of a small homoclinic connection of \(M\). In accordance with the fact that \(\mu>1\), the existing unstable limit cycle is not destroyed in this process. As in the monodromic case, the destruction of this small homoclinic orbit generates a stable limit cycle enclosing the stable one. Eventually these two cycles collide, momentarily forming a double limit cycle which is stable from the outside but unstable from the inside. In this case, an infinite series of heteroclinic connections from \(M\) to \(L\) accumulates at this semi-stable nonhyperbolic limit cycle. Thus, when this limit cycle is destroyed we again encounter an infinite series of flashing heteroclinic connections from \(M\) to \(L\) making a decreasing number of turns around \(L\). The further behaviour is the same as in the first subcase. These changes may be described as the interplay between two homoclinic bifurcations yielding a fold of cycles bifurcation in the presence of two series of flashing heteroclinic connections.

All above results can be obtained by considering a {\it model Poincar\'{e} map} that is constructed simi\-lar to the monodromic case. The approximate singular maps \(\Delta_L:\Sigma_L\to\Pi_L\) and \(\Delta_M:\Sigma_M\to\Pi_M\) can be expressed exactly in the same way as before, namely
\[\Delta_L(\xi_L)=\xi_L^\lambda\quad\text{and}\quad\Delta_M(\xi_M)=\xi_M^\mu.\]
On the contrary, the truncated regular maps \(Q:\Pi_L\to\Sigma_M\) and \(R:\Pi_M\to\Sigma_L\) become
\[Q(\eta_L)=\beta_2-\theta_1\eta_\lambda\quad\text{and}\quad R(\eta_M)=\beta_1-\theta_2\eta_M.\]
Thus, in the non-monodromic case we obtain the model Poincaré map \(P:\Sigma_L\to\Sigma_L\) as
\begin{equation}\label{eq:nonmonodromicpoincaremap}
    P(\xi_L)=\beta_1-\theta_2(\beta_2-\theta_1\xi_L^\lambda)^\mu.
\end{equation}
Note that in terms of the model maps we can change between the monodromic and non-monodromic cases through the transformation \((\theta_1,\theta_2)\mapsto(-\theta_1,-\theta_2)\).

It has been shown by \cite{dukov} that taking into account higher-order terms in the singular and regular maps in (\ref{eq:nonmonodromicpoincaremap}) does not change topology of the bifurcation diagrams in both subcases.

%%%%%%%%%%%%%%%%%%%%%%%%%%%%%%%%%%%%%%%%%%%%%%%%%%%%%%%%%%%%%%%%%%%%%%%%%%%%%%%%%%%%%%%%%%%%%%%%%%%%%%%%%%

\section{Two polynomial examples}
\graphicspath{{fig/part3/}{/}}

Here we present two systems of polynomial ordinary differential equations which illustrate some of the theoretical results described in the previous section. These explicit examples allow for numerical computation of the various phase objects and bifurcation diagrams using {\sc matcont} \cite{matcont:2003,matcont:2008,matcontHOM:2012}, which can be compared to their theoretically predicted counterparts.

Our strategy to find explicit monodromic examples -- originally applied by \cite{sandstede} to construct homoclinic bifurcation examples -- will be to look for an algebraic variety which has the shape of a monodromic contour and then to look for vector fields which are tangent to this variety. For a non-monodromic example a different construction is applied. It is based on perturbing a reversible system containing a big homoclinic connection.

\subsection{A monodromic example}
\subsubsection{Derivation of the example}
To construct a vector field containing a monodromic heteroclinic contour, we will look for an algebraic variety which has a shape similar to such a contour. For this example we choose the union of a parabola with the \(x\)-axis. Specifically, we choose the parabola described by \(y=x(1-x)\), see Fig. \ref{fig:parabolaline}.
\begin{figure}[H]
    \centering
    \begin{tikzpicture}[scale=0.8]
  \begin{axis}[
    axis lines=center,
    xmin=-0.25,
    xmax=1.25,
    ymin=-0.25,
    ymax=0.5,
    xlabel=\(x\),
    ylabel=\(y\),
    ticks=none,
    domain=-0.5:1.5,
    samples=150,
    ]
    \addplot[thick] {0};
    \addplot[thick] {x*(1-x)};
    
    \node[below right] at (axis cs:0,0) {0};
    \node[below left] at (axis cs:1,0) {1};
  \end{axis}
\end{tikzpicture}
    \caption{The union of the parabola \(y=x(1-x)\) with the \(x\)-axis.}
    \label{fig:parabolaline}
\end{figure}
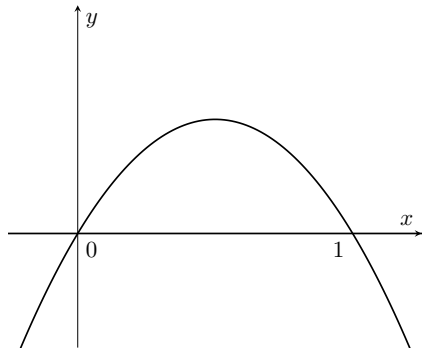
This algebraic variety can be realised as the zero-set of the function
\begin{equation}
    G(x,y)=y(y-x(1-x))
\end{equation}
and we write \(\Gamma=G^{-1}(0)\).

Our next step is to look for two polynomial functions \(f(x,y)\) and \(g(x,y)\) such that the vector field
\begin{equation}\label{eq:vectorfield}
    \begin{cases}
    \dot{x}=f(x,y)\\
    \dot{y}=g(x,y)
    \end{cases}
\end{equation}
is tangent to \(\Gamma\). For this to happen it must hold that
\begin{equation}\label{eq:conditions}
    \langle \nabla G(x,y),
    \begin{pmatrix}
        f(x,y)\\
        g(x,y)
    \end{pmatrix}
    \rangle=0
\end{equation}
for all \((x,y)\in\Gamma\). The equation (\ref{eq:conditions}) will yield a condition from which we can derive preliminary expressions for \(f(x,y)\) and \(g(x,y)\). As we will soon see, it will suffice to use polynomials of second degree. Thus, we set
\begin{equation}
\label{eq:f&g}
    \begin{array}{rcl}
        f(x,y)&=&a_{10}x+a_{01}y+a_{20}x^2+a_{11}xy+a_{02}y^2,\\
        g(x,y)&=&b_{10}x+b_{01}y+b_{20}x^2+b_{11}xy+b_{02}y^2.
    \end{array}
\end{equation}
Then, (\ref{eq:conditions}) together with $G(x,y)=0$ yields the system of two polynomial equations
\begin{equation}
    \label{eq:totalsystem}
        \begin{array}{rcl}
b_{10}x^2 + (a_{10}-2b_{10}+b_{01})xy + (a_{01}-2b_{01})y^2 &-&\\
(b_{10}-b_{20}) x^3 &-&\\
(2a_{10}+2b_{20}-b_{11}+b_{01}-a_{20})x^2y &+&\\
(a_{11}-2b_{11}-2a_{01}+b_{02})xy^2 + (a_{02}-2b_{02})y^3 &-&\\
b_{20}x^4 - (2a_{20}+b_{11})x^3y -  (2a_{11}+b_{02})x^2y^2&-&\\
2a_{02}xy^3 &=&0,\\
        y^2-xy+x^2y&=&0,
    \end{array}
\end{equation}
where the coefficients $a_{jk}$ and $b_{jk}$ are yet unknown.

To study this system we use the software package Maple to calculate its Gröbner basis using the graded reverse lexicographic order for the $x^jy^k$-monomials. This yields a total of four basis polynomials including \(G(x,y)\) itself. To look for solutions of (\ref{eq:totalsystem}), we choose the basis polynomial of the minimal degree and require all its coefficients to be identically zero. This yields the system of linear equations
\begin{equation}\label{eq:linearsystem}
\left\{
    \begin{array}{rcl}
        b_{20}&=&0,\\
        b_{10}-b_{20}&=&0,\\
        b_{10}&=&0,\\
        a_{02}&=&0,\\
        2a_{01}+a_{11}-2a_{20}+b_{11}&=&0,\\
        a_{10}+2b_{10}+a_{20}+2b_{11}&=&0,\\
        a_{02}+2a_{11}-b_{02}&=&0,\\
        2a_{10}+a_{01}-b_{01}+a_{20}+2b_{20}&=&0,
    \end{array}
\right.
\end{equation}
which is solved whenever we set
\begin{eqnarray*}
a_{10}&=&-a_{20},\\
b_{01}&=&a_{01}+a_{10},\\
b_{11}&=&-2a_{01}-2a_{10}-a_{11},\\
b_{02}&=&2a_{11},
\end{eqnarray*}
and all remaining coefficients to zero. With this choice, all basis polynomials except $G$ become identically zero. We see that this leaves us with three free parameters remaining. Introducing
\begin{eqnarray*}
a&:=&a_{10},\\
b&:=&a_{01},\\
c&:=&a_{11},
\end{eqnarray*}
and substituting the result into \(f(x,y)\) and \(g(x,y)\) in (\ref{eq:f&g}), we  now obtain the vector field
\begin{equation}\label{eq:monodromicvectorfield}
    \begin{cases}
        \dot{x}=ax+by+cxy-ax^2,\\
        \dot{y}=(a+b)y-(2a+2b+c)xy+2cy^2,
    \end{cases}
\end{equation}
for which $\Gamma$ is invariant for any choice of \(a\), \(b\) and \(c\). This can now be verified directly, since the left-hand side of (\ref{eq:conditions}) contains $G(x,y)$ as factor.

As expected, this vector field always has equilibria \(L=(0,0)\) and \(M=(1,0)\). However, the types of these equilibria depend on the values of \(a\), \(b\) and \(c\). The eigenvalues at \((0,0)\) are \(a\) and \(a+b\) and the eigenvalues at \((1,0)\) are \(-a\) and \(-a-b-c\).

As a first example, we set \(a=1\), \(b=-3\) and \(c=3/2\). We find that \(\lambda_u=1\) and \(\lambda_s=-2\) so \(\lambda=-\lambda_s/\lambda_u=2>1\), while \(\mu_u=1/2\) and \(\mu_s=-1\) so \(\mu=-\mu_s/\mu_u=2>1\). Obviously, $\lambda\mu>1$ implying that the heteroclinic contour is stable from the inside. We recognize that these saddle indices correspond to the first subcase treated in Section \ref{sec:monodromic}, {\it after the time reversal}. A phase portrait of this system is given in Fig. \ref{fig:phaseportrait2}.

\begin{figure}[H]
    \centering
    \includegraphics[]{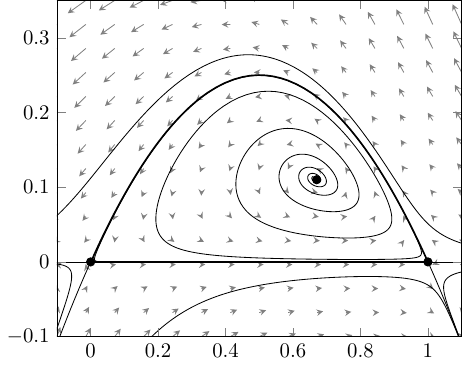}
    \caption{Phase portrait of (\ref{eq:monodromicvectorfield}) for \(a=1\), \(b=-3\) and \(c=3/2\)}
    \label{fig:phaseportrait2}
\end{figure}

\begin{figure}[H]
    \centering
    \includegraphics[]{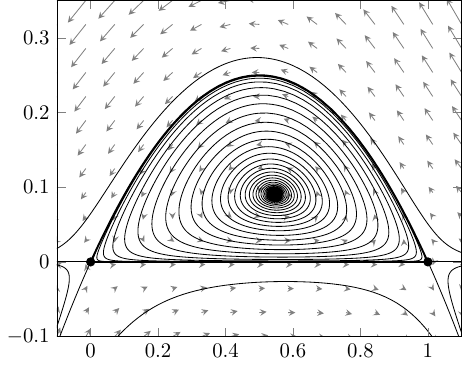}
    \caption{Phase portrait of (\ref{eq:monodromicvectorfield}) for \(a=1\), \(b=-3\) and \(c=1/2\)}
    \label{fig:phaseportrait1}
\end{figure}

As a second example, we set \(a=1\), \(b=-3\) and \(c=1/2\).  Then it remains that \(\lambda=2>1\) but now \(\mu_u=3/2\) and \(\mu_s=-1\) so \(\mu=-\mu_s/\mu_u=2/3<1\) and  \(\lambda\mu=4/3>1\). We recognize that we are in the situation of the second subcase treated in Section \ref{sec:monodromic}. A phase portrait of this system is shown in Fig. \ref{fig:phaseportrait1}.

In order to perform bifurcation analysis we still need to add two parameter-dependent perturbing terms to (\ref{eq:monodromicvectorfield}) which break the heteroclinic connections separately. As these connections can be described by the equations \(y=x(1-x)\) and \(y=0\) this can be achieved by choosing parameters \(\alpha\) and \(\varepsilon\) and adding the perturbing terms \(\alpha(y-x(1-x))\) and \(\varepsilon y\) to (\ref{eq:monodromicvectorfield}). It is clear that the first perturbation does not affect the parabola while the second perturbation does not affect the straight line. We add one perturbing term to \(\dot{x}\) and one to \(\dot{y}\). This yields
\begin{equation}\label{eq:perturbedmonodromicvectorfield}
\left\{
    \begin{array}{rcl}
        \dot{x}&=&ax+by+cxy-ax^2+\varepsilon y,\\
        \dot{y}&=&(a+b)y-(2a+2b+c)xy+2cy^2\\
        &&~~~~~~~~~~~+~\alpha(y-x(1-x)).
    \end{array}
\right.
\end{equation}

\subsubsection{Numerical analysis of the example}
Using the Matlab software package {\sc matcont} \cite{matcont:2003,matcont:2008,matcontHOM:2012}, we perform numerical bifurcation analysis of the system (\ref{eq:perturbedmonodromicvectorfield}) in both subcases. More specifically, we will be able to locate and continue homoclinic connections and cycles which will allow us to compute the bifurcation diagrams of both subcases.

In Appendix \ref{app:melnikov} we show that for the chosen values of the parameters \(a\), \(b\) and \(c\), the splitting of the heteroclinic connections under the given perturbations is regular. This is done by showing that the relevant Melnikov integrals are nonvanishing.

In Section \ref{sec:monodromic}, a general sketch of these diagrams is presented (see Fig. \ref{fig:mono1bifdiag}). The numerical study of these polynomial systems will allow us to compare this general diagram based on the behaviour of model maps to an actual example of such a diagram.

This analysis in the first case yields the bifurcation diagram displayed in Fig. \ref{fig:numericbifurcationdiagramcase1}. This analysis in the second case yields the bifurcation diagram displayed in Fig. \ref{fig:numericbifurcationdiagramcase2}.

To complete our analysis of these systems we compute representative phase portraits for parameter values in all components of the bifurcation diagrams. The contents of these phase portraits were already predicted in the previous part. They are displayed in Figs. \ref{fig:totalhombifdiag1} and \ref{fig:totalhombifdiag2}.

\columnbreak

\begin{figure}[H]
    \centering
    \caption{The parameter plane diagram in the first subcase (\(a=1\), \(b=-3\) and \(c=3/2\))}
    \includegraphics[width=0.4\textwidth]{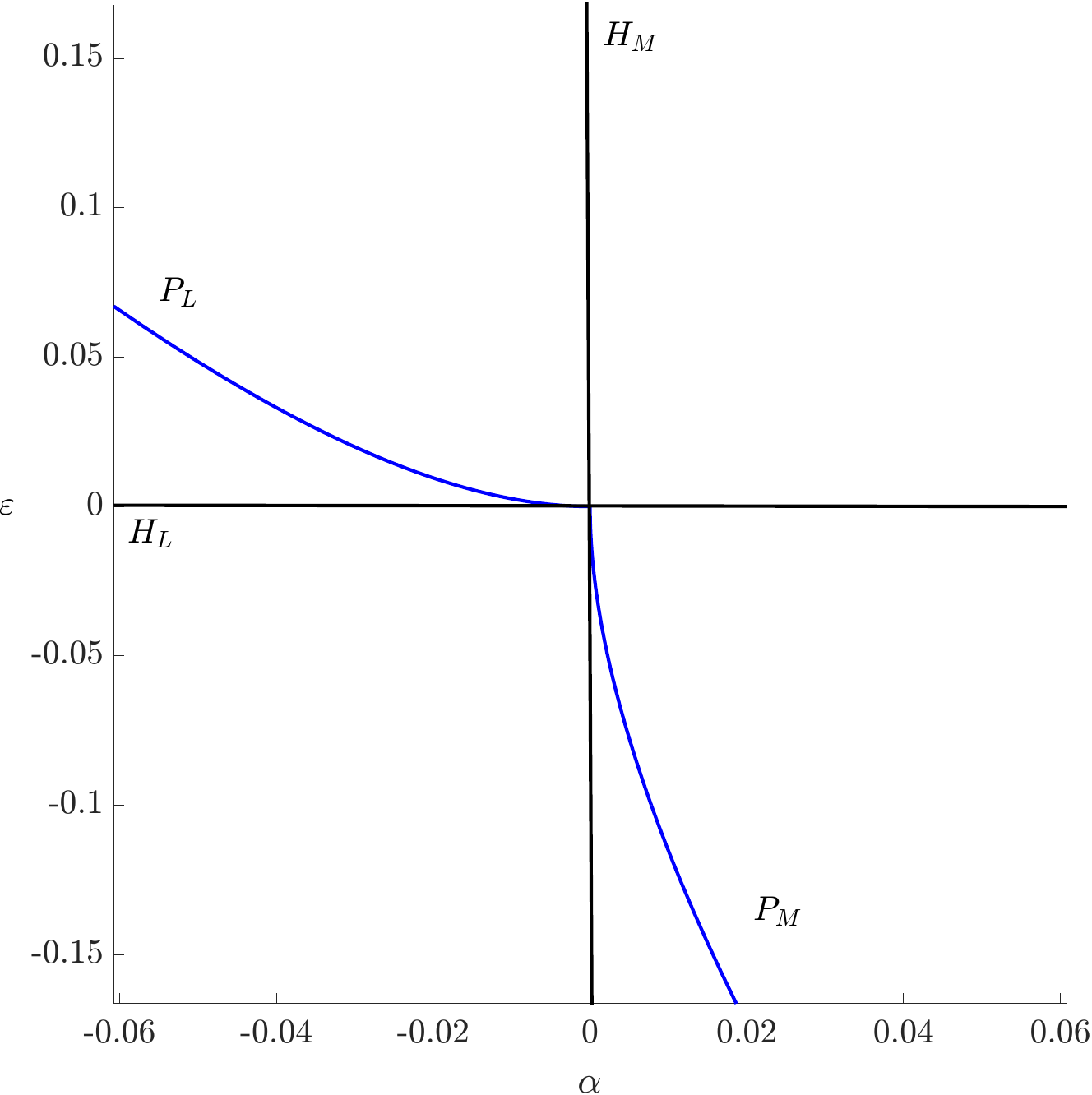}
    \label{fig:numericbifurcationdiagramcase1}
\end{figure}

\begin{figure}[H]
    \centering
    \caption{The parameter plane diagram in the second subcase (\(a=1\), \(b=-3\) and \(c=1/2\))}
    \includegraphics[width=0.4\textwidth]{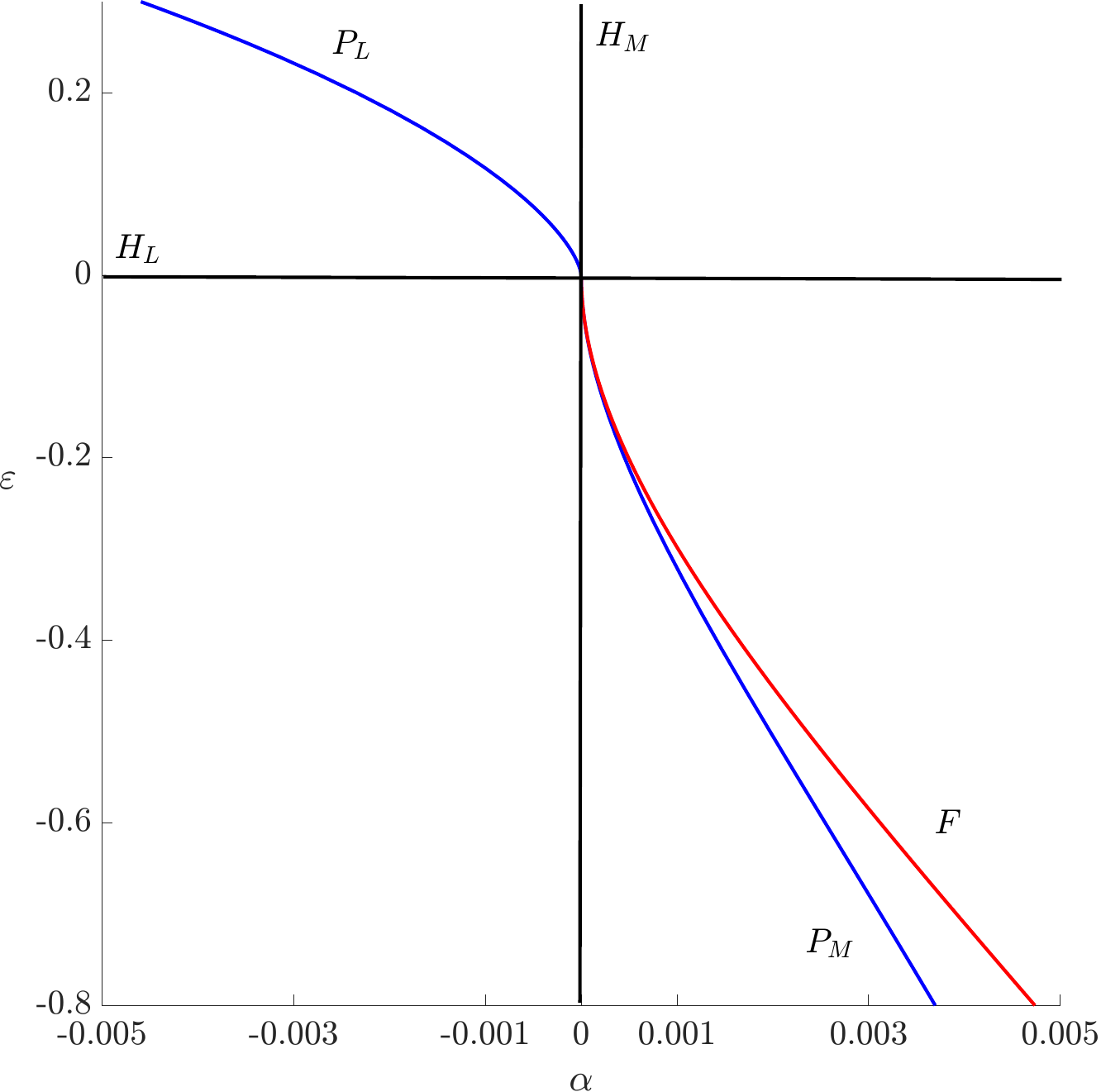}
    \label{fig:numericbifurcationdiagramcase2}
\end{figure}

\end{multicols}

\begin{figure}[H]
  \centering
  \includegraphics[width=\textwidth]{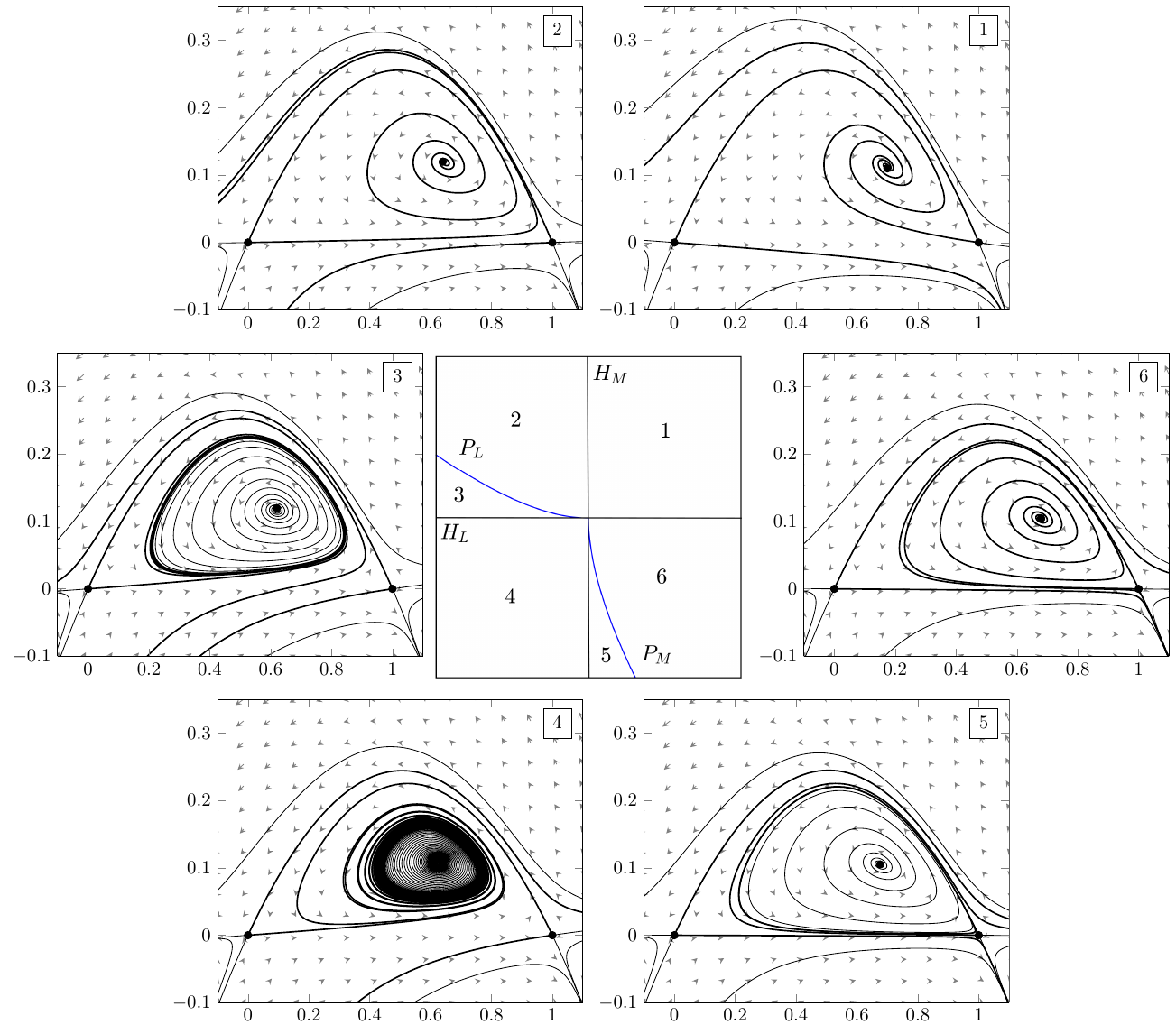}
  \caption{The full bifurcation diagram for the first subcase}
  \label{fig:totalhombifdiag1}
\end{figure}

\begin{figure}[H]
  \centering
  \includegraphics[width=\textwidth]{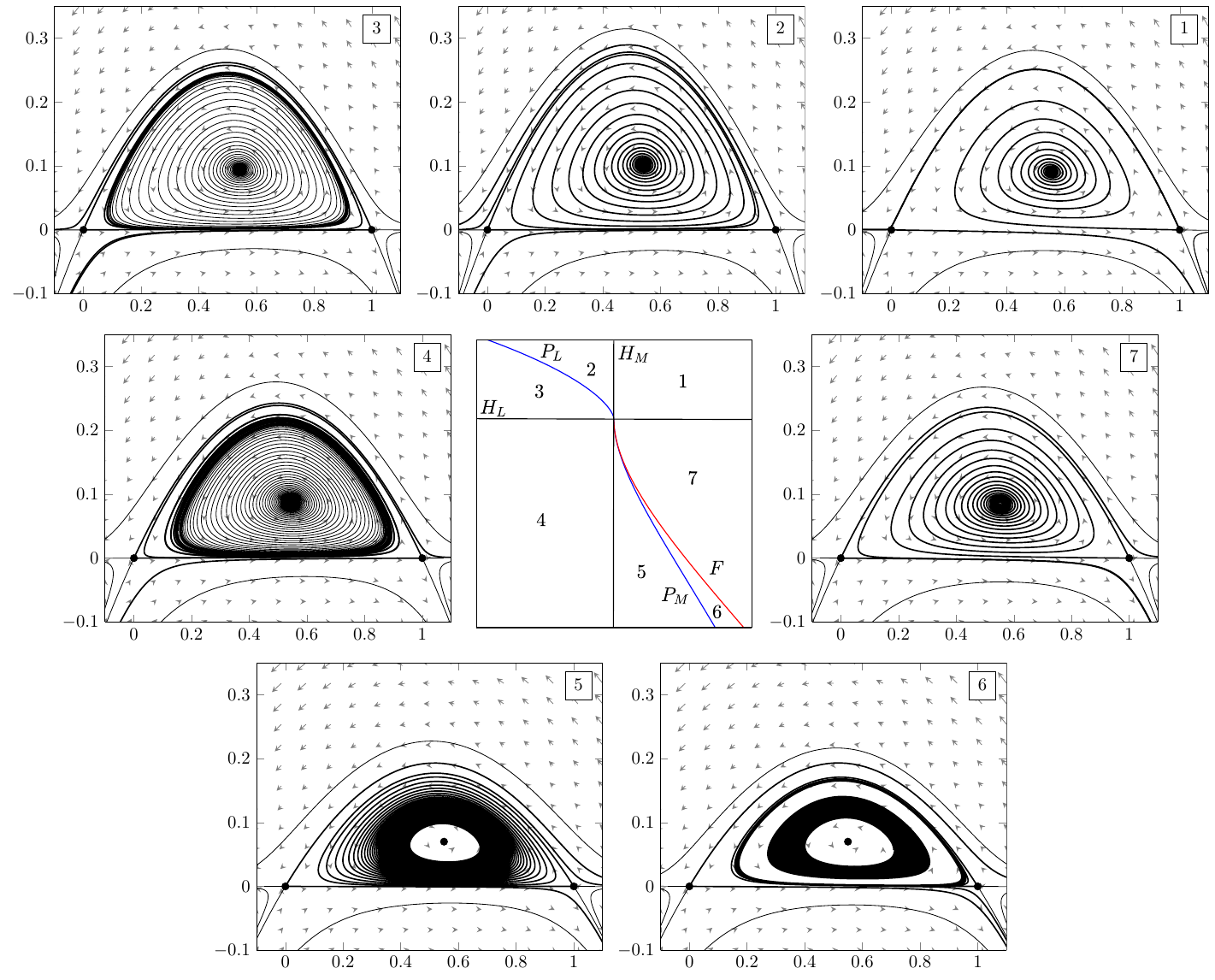}
  \caption{The full bifurcation diagram for the second subcase}
  \label{fig:totalhombifdiag2}
\end{figure}

\begin{multicols}{2}
\allowdisplaybreaks
\subsection{A non-monodromic example}

\subsubsection{Finding an example}
The starting point is the following reversible system\footnote{The system is invariant under the involution $(t,x,y) \mapsto (-t,-x,y)$.}
\begin{equation}
    \label{eq:Revers0}
    \left\{\begin{array}{rcl}
    \dot{x}&=&y,\\
    \dot{y}&=&x+xy-x^3,
    \end{array}
    \right.
\end{equation}
that has a saddle $(0,0)$, a stable focus $(-1,0)$, and an unstable focus $(1,0)$. The saddle at the origin has a `big' homoclinic orbit, while both foci are located inside the domain bounded by the homoclinic loop as seen in Fig. \ref{fig:Revers0}.
\begin{figure}[H]
    \centering
    \includegraphics{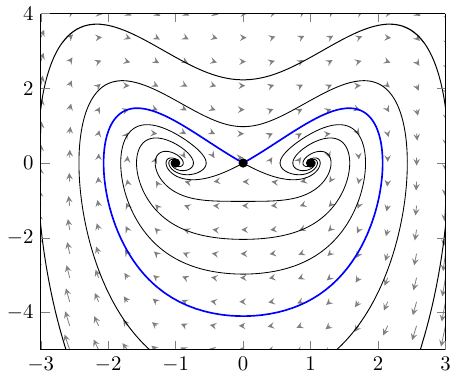}
    \caption{System (\ref{eq:Revers0}) containing a big homoclinic loop}
    \label{fig:Revers0}
\end{figure}

\begin{figure}[H]
    \begin{center}
    \includegraphics[]{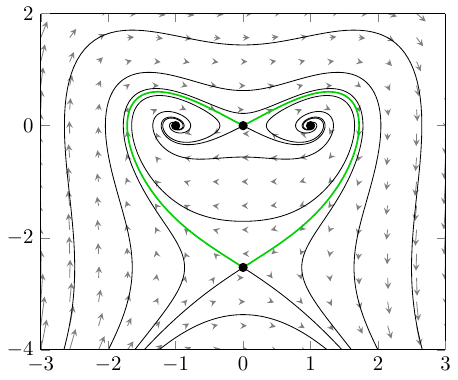}
    \end{center}
    \caption{The symmetric non-monodromic heteroclinic contour in the reversible system (\ref{eq:Revers1}) with $\gamma=\gamma_0$.}
    \label{fig:RevH}
\end{figure}
\begin{figure}[H]
    \begin{center}
    \includegraphics[width=0.45\textwidth]{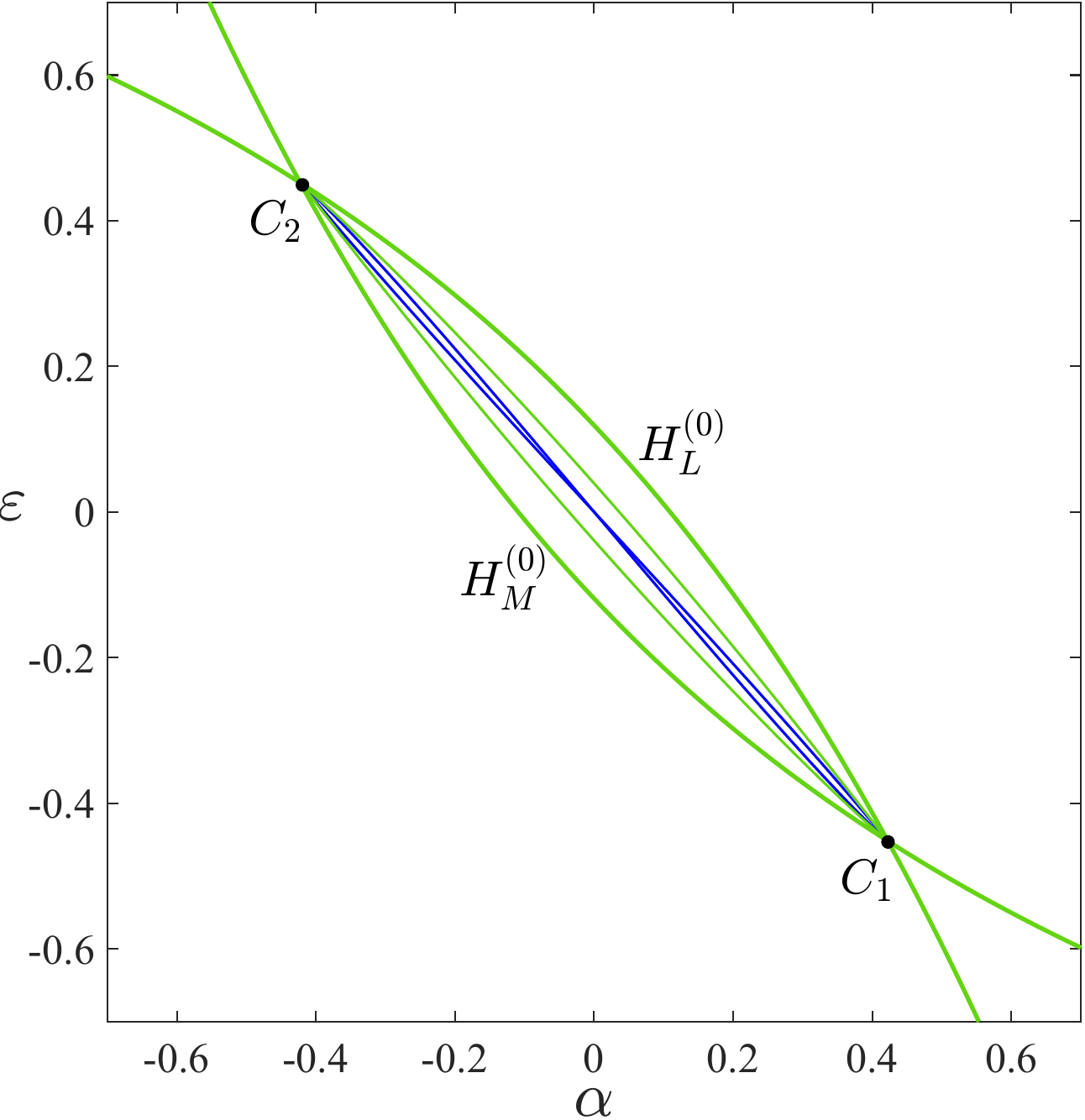}
    \end{center}
    \caption{Partial bifurcation diagram of (\ref{eq:DissH}) with $\gamma=2.7$: heteroclinic bifurcation curves (green), primary homoclinic curves (blue).}
    \label{fig:BifDiag}
\end{figure}
The one-parameter modification of (\ref{eq:Revers0}) that preserves the reversibility,
\begin{equation}
    \label{eq:Revers1}
    \left\{\begin{array}{rcl}
    \dot{x}&=&y(y+\gamma),\\
    \dot{y}&=&x+xy-x^3,
    \end{array}
    \right.
\end{equation}
has an additional saddle at $(0,-\gamma)$. This saddle is connected to the saddle at $(0,0)$ by two heteroclinic orbits when $\gamma_0=2.5315\ldots$ (see Fig. \ref{fig:RevH}). Of course,  both saddles of (\ref{eq:Revers1}) are neutral.

To break reversibility, introduce the following two-parameter deformation of (\ref{eq:Revers1}):
\begin{equation}
    \label{eq:DissH}
    \left\{\begin{array}{rcl}
    \dot{x}&=&\varepsilon x + y(y+\gamma),\\
    \dot{y}&=&\alpha y + x+xy-x^3.
    \end{array}
    \right.
\end{equation}
This system still has a saddle at $(0,0)$, while the second saddle moves off the $y$-axis. In order to obtain a generic non-monodromic heteroclinic contour, all three parameters of (\ref{eq:DissH}) have to vary. Since the existence of such heteroclinic contour is a codim 2 phenomenon, we expect a curve in the $(\alpha,\varepsilon,\gamma)$-space passing through $(0,0,\gamma_0)$, along which the contour persists.

\subsubsection{Numerical analysis of the example}
The main curves of the bifurcation diagram of (\ref{eq:DissH}) in the $(\alpha,\varepsilon)$-plane at $\gamma=2.7$ is shown in Fig. \ref{fig:BifDiag}. Since the transformation $(t,x,\alpha,\varepsilon) \mapsto (-t,-x,-\alpha,-\varepsilon)$ preserves the system (\ref{eq:DissH}), its bifurcation diagram is invariant under the inversion $(\alpha,\varepsilon) \mapsto (-\alpha,-\varepsilon)$.

\begin{figure}[H]
    \begin{center}
    \includegraphics[width=0.45\textwidth]{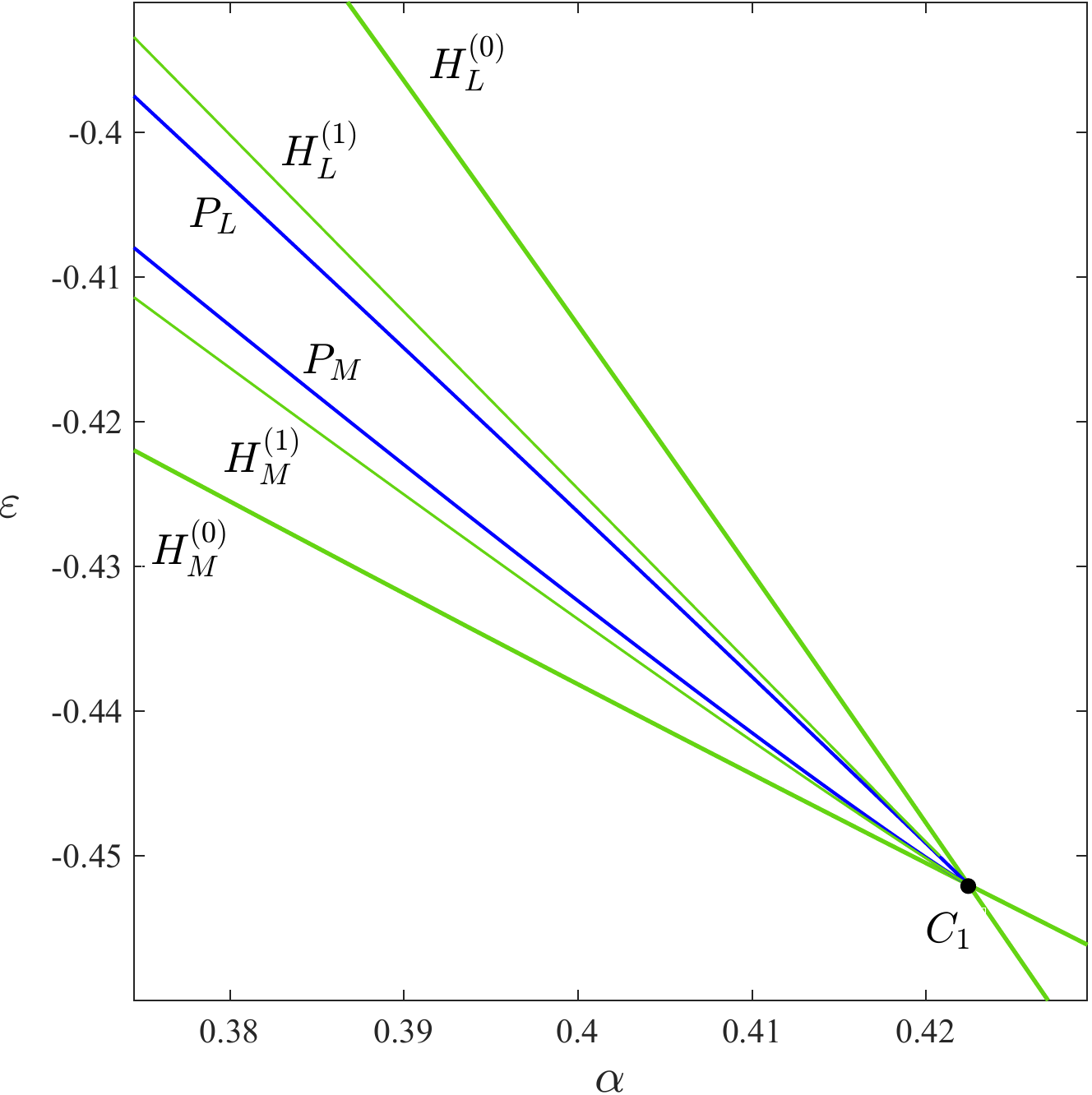}
    \end{center}
    \caption{Partial bifurcation diagram of (\ref{eq:DissH}) near the lower codim 2 point $C_1$: primary and secondary heteroclinic bifurcation curves $H_{L,M}^{(k)}$ (green), primary homoclinic curves $P_{L,M}$(blue).}
    \label{fig:ZoomBifDiag}
\end{figure}
There are two non-monodromic heteroclinic contours at the intersection points $C_{1,2}$ of the he\-te\-ro\-clinic curves $H_{L,M}^{(0)}$. These points are the end-points of two primary homoclinic curves $P_{L,M}$, which also intersect at $(\alpha,\varepsilon)=(0,0)$, where two independent homoclinic orbits to both saddles exist. Since the system is reversible at $\alpha=\varepsilon=0$, both homoclinic orbits are symmetric. All orbits in the annulus bounded by these two homoclinic orbits are periodic.

\par
The lower heteroclinic contour point is found to be
$$
C_1=(\alpha,\varepsilon)=(0.422432\ldots,-0.452007\ldots).
$$
The corresponding phase portrait is shown in Fig. \ref{fig:C1}. The saddles at these parameter values are
$$
L=(0,0)~~~{\rm and}~~~ M=(-0.57039\ldots,-2.6009\ldots),
$$
with eigenvalues
$$
\lambda_s=-1.7151\ldots,\ \ \lambda_u=1.6856\ldots
$$
and
$$
\mu_s=-2.8436\ldots,\ \ \mu_u=2.2436\ldots,
$$
respectively. The corresponding indices are
$$
\lambda=-\frac{\lambda_s}{\lambda_u}=1.0175\ldots>1
$$
and
$$
\mu=-\frac{\mu_s}{\mu_u}=1.2674\ldots>1.
$$
Notice that both saddles in this heteroclinic contour are dissipative.
\begin{figure}[H]
    \begin{center}
    \includegraphics[]{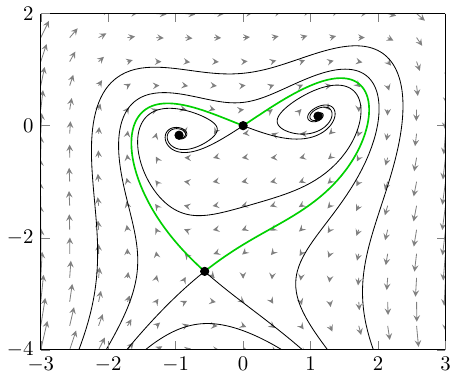}
    \end{center}
    \caption{Non-monodromic heteroclinic contour in system (\ref{eq:Revers1}) with $\gamma=2.7$ at codim 2 point $C_1$.}
    \label{fig:C1}
\end{figure}
\par
The upper heteroclinic contour point
$$
C_2=(\alpha,\varepsilon)=(-0.422432\ldots,0.452007\ldots).
$$
The corresponding phase portrait is shown in Fig. \ref{fig:C2}. The saddles at these parameter values are
$$
L=(0,0)~~~{\rm and}~~~ M=(0.570399\ldots,-2.6009\ldots),
$$
with the eigenvalues
$$
\lambda_s=-1.6856\ldots,\ \ \lambda_u=1.7151\ldots
$$
and
$$
\mu_s=-2.2436\ldots,\ \ \mu_u=2.8436\ldots,
$$
respectively. The corresponding indices are
$$
\lambda=-\frac{\lambda_s}{\lambda_u}=0.982\ldots < 1
$$
\rm and
$$
\mu=-\frac{\mu_s}{\mu_u}=0.789\ldots<1.
$$
\begin{figure}[H]
    \begin{center}
    \includegraphics[]{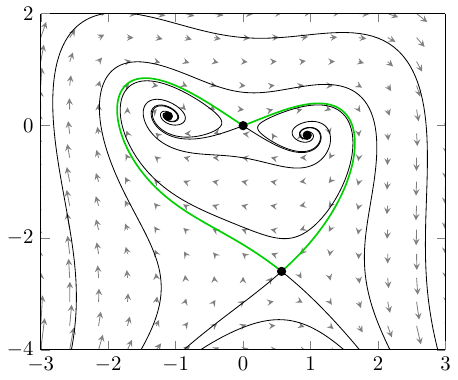}
    \end{center}
    \caption{Non-monodromic heteroclinic contour in system (\ref{eq:DissH}) with $\gamma=2.7$ at codim 2 point $C_2$.}
    \label{fig:C2}
\end{figure}
\par
\end{multicols}

\begin{figure}[H]
    \centering
    \includegraphics[width=0.9\textwidth]{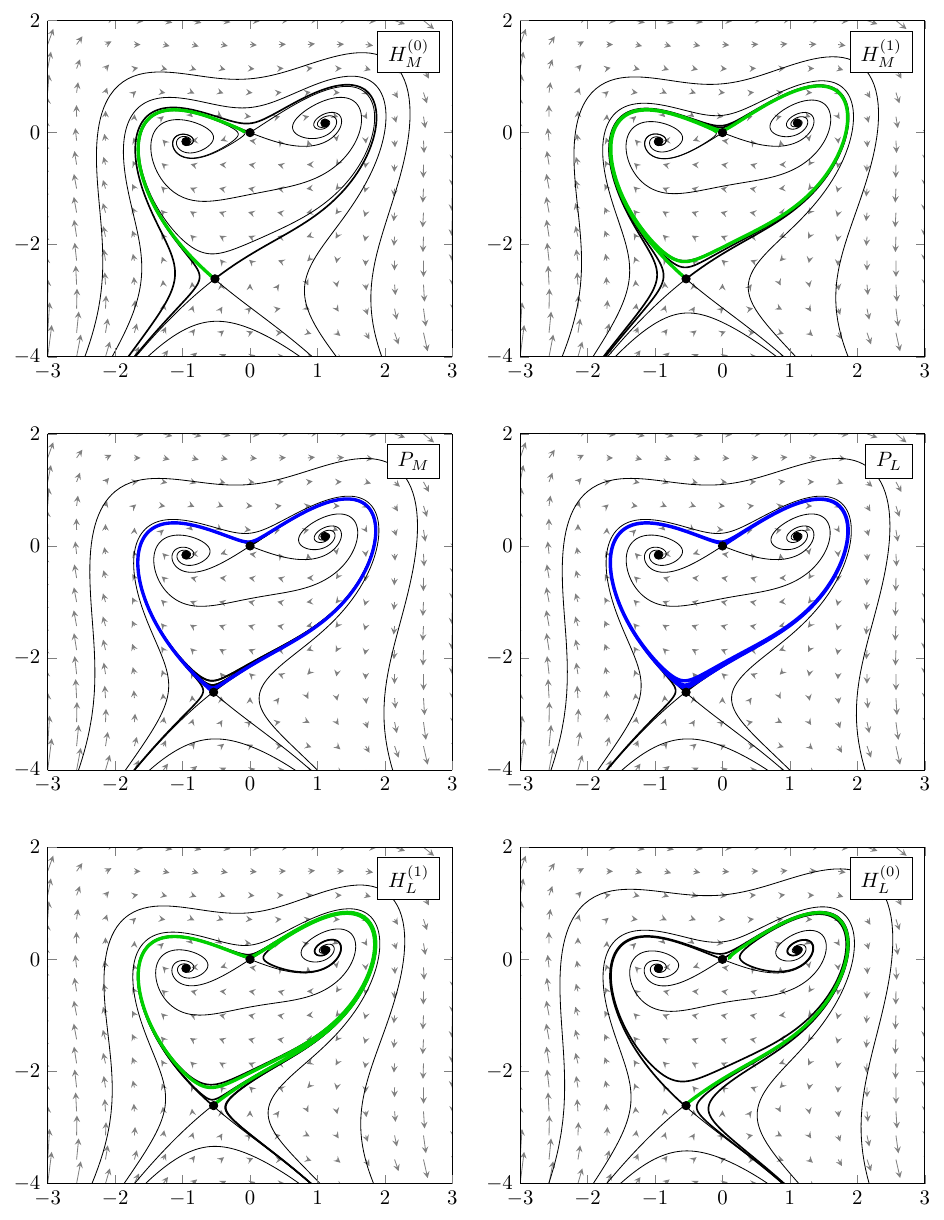}
    \caption{Phase portraits for critical values of the parameters}
    \label{fig:nonmomocritical}
\end{figure}
\vfill

\begin{multicols}{2}
\section{Discussion}
%\section{Details and further questions}

%\subsection{Details of this thesis}
The theoretical results collected in this paper can be derived from the study of the truncated model maps. However, it must be shown that adding higher order terms to the truncated model maps does not change the results in generic cases, so that the leading terms are indeed sufficient to obtain all relevant theoretical results. The papers \cite{reyn,Roi:1989,Sha:1992,dukov}, on which our presentation is based, address this issue using different techniques.

There are two approaches leading to the same truncated Poincar\'{e} maps that differ by construction of singular correspondence maps near saddles. The first approach is based on fact that there is a local invertible $C^1$ coordinate transformation that maps solutions of a nonlinear planar system near a hyperbolic saddle onto the solutions of its linearization, see \cite{Deng:1989} or \cite{Ku:2004} for a more geometric treatment. This allows to compute the singular map exactly as $\xi \mapsto \xi^{\lambda}$, where $\xi$ is a coordinate in the unstable direction and $\lambda$ is the saddle index. Since for the analysis of cycle bifurcations further smoothness of the Poincar\'{e} map is required, one needs to impose more genericity conditions. Actually, there is a $C^k$ linearizing transformation with any $k>1$, if a finite number $N(k)$ of lower-order resonance conditions on the eigenvalues are not satisfied \cite{St:1958}. Thus the singular map is generically as smooth as required for $\xi>0$, provided the system (\ref{eq:planarsystem}) is sufficiently smooth.

There is an alternative approach (see, e.g. \cite{ShilnikovTuraev:2001}) that leads to the same truncated Poincar\'{e} maps but avoids a reduction to a linear system near the saddles and, thus, does not impose extra conditions. In this approach, one first introduces new coordinates near the saddle, such that the system remains nonlinear but the stable and unstable manifolds locally coincide with the axes (and some other simplifications are done). Then by considering a boundary-value problem, one proves that the singular map near the saddle has the form
$\xi \mapsto \xi^{\lambda}+o(\xi^{\lambda})$.
Composing such maps with the regular global maps, gives the same truncated Poincar\'{e} maps as in the first approach. This construction leads to a small finite loss of smoothness relative to that of the original planar system (\ref{eq:planarsystem}). The papers \cite{reyn,dukov} use the first approach (linearization), while the second one is employed in \cite{Roi:1989} and seems to be less restrictive. However, the linearization method might be preferable didactically.

In order to complete our collection of examples demonstrating the theoretical results of this thesis, one more example needs to be added. Of the non-monodromic contours, we were only able to produce an example of the simpler subcase (with \(\lambda<1\) and \(\mu<1\)). Addition of a non-monodromic example of the other subcase (with \(\lambda<1\) but \(\mu>1\)), displaying a fold of cycles bifurcation will complete the stock of examples.

%\subsection{Further questions}
The bifurcations discussed in this paper all have a semi-local nature. That is, they appear in a neighborhood of a heteroclinic contour. It is often the case that bifurcations which have a global or semi-local nature in systems with few parameters appear locally in systems with more parameters. For example, in \cite{BazKuzKhib:1989,Dum:1991} the monodromic case is shown to appear close to one case of a degenerate Bogdanov-Takens bifurcation of codim 3. For the non-monodromic case, an analogous result remains to be found. We expect that it must have codimension bigger than 3 and that the linear part of the associated critical normal form is identically zero.

\appendix{Flashing heteroclinic
connections}
\label{app:flashing}
The most exotic bifurcation phenomenon encountered in the paper is the appearance of flashing (or sparkling) heteroclinic connections. In fact, its appearance is one of the most important reasons to study bifurcations of non-monodromic heteroclinic contours, as it results in a bifurcation diagram which is surprisingly complex for a system which is only planar. Here we summarize for the reader's convenience the major facts on these bifurcations, closely following \cite{dukov}.

As noted in the introduction, this bifurcation was discovered by \citeauthor{maltapalis} [\citeyear{maltapalis}], see also \cite{ShilnikovTuraev:2001}. The flashing heteroclinic connections refer to the generation of an infinite series of heteroclinic connections between two saddles which make an increasing number of turns around one of the saddles before reaching the other.

The situation in which this bifurcation was first discovered is associated with a non-hyperbolic (double) limit cycle. Of the two saddles involved in this bifurcation, one is located inside the limit cycle while the other is outside\footnote{Of course, more equilibria must be encircled by the cycle.}. The non-hyperbolic limit cycle serves as the $\omega$-limit set of an orbit leaving the outside saddle, while simultaneously it is the $\alpha$-limit set for an orbit approaching the other saddle for increasing \(t\). Thus, there are orbits winding around the cycle from both the outside and inside.

The following theorem shows that when this limit cycle disappears, an infinite series of heteroclinic connections is generated which wind around the inside saddle an increasing number of times.

In the theorem we consider the parameter \(\alpha\in\mathbb{R}\) involved in the fold of cycles bifurcation. Thus, for \(\alpha=0\) there exists a semi-stable limit cycle as described above and for \(\alpha>0\) this cycle disappears.

\begin{theorem}\label{thm:flashingcycle}
    Consider a vector field containing two saddles and a semi-stable limit cycle as described above. Then there exists a sequence \((\alpha_n)_{n\in\mathbb{N}}\) with \(\alpha_n>0\) for all \(n \in \mathbb{N}\) and \(\alpha_n\to0\) such that for every \(n\in \mathbb{N}\), the vector field corresponding to the parameter value \(\alpha_n\) contains a heteroclinic connection making more turns around the inner saddle then the previous one.
\end{theorem}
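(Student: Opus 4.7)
The plan is to construct the $\alpha_n$ via an intermediate-value argument applied to the Poincar\'e return map on a cross-section to the semi-stable limit cycle $\Gamma_0$. Take a cross-section $\Sigma$ transverse to $\Gamma_0$ at $\alpha=0$, equipped with a coordinate $\xi$ vanishing on $\Gamma_0$ and oriented so that $\xi>0$ is the side facing the inner saddle. Since $\Gamma_0$ attracts on the outer side and repels on the inner side and disappears for $\alpha>0$, the return map admits the generic saddle-node-of-fixed-points normal form
\[
P_\alpha(\xi)-\xi \;=\; \xi^2+\alpha+O(\xi^3,\alpha\xi,\alpha^2),
\]
so that for $\alpha>0$ the displacement is strictly positive and iterates drift monotonically through $\xi=0$ from negative to positive values. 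Comparing with the scalar ODE $\dot\xi=\xi^2+\alpha$ gives the standard saddle-node passage estimate: the number of iterations required to carry a point from any fixed $a<0$ to any fixed $b>0$ is $\pi/\sqrt{\alpha}+O(1)$ as $\alpha\to 0^+$.

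Next I would track the two invariant manifolds involved. The unstable branch of the outer saddle that accumulates on $\Gamma_0$ at $\alpha=0$ meets $\Sigma$ transversally at a point $\xi^u(\alpha)<0$, depending smoothly on $\alpha$, with $\xi^u(0)<0$. Analogously, following the stable branch of the inner saddle that accumulates on $\Gamma_0$ backward in time yields a first intersection with $\Sigma$ at $\xi^s(\alpha)>0$, with $\xi^s(0)>0$. Existence of a heteroclinic orbit from the outer to the inner saddle that performs exactly $n$ returns to $\Sigma$ inside the tubular neighborhood of $\Gamma_0$ is then encoded by
\[
F_n(\alpha) \;:=\; P_\alpha^{\,n}\bigl(\xi^u(\alpha)\bigr) - \xi^s(\alpha) \;=\; 0.
\]
Since $\Gamma_0$ encloses the inner saddle, each additional return to $\Sigma$ contributes exactly one extra turn of the connection around that saddle, so a zero of $F_n$ produces a heteroclinic connection with strictly more turns than a zero of $F_{n-1}$.

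For each sufficiently large $n$ I would close the argument by the intermediate value theorem. Using the passage estimate, pick $\alpha>0$ small enough that $n$ iterates of $P_\alpha$ starting at $\xi^u(\alpha)$ do not yet reach $\xi^s(\alpha)$, so $F_n(\alpha)<0$; then enlarge $\alpha$ slightly within the narrow range for which the same iterates overshoot $\xi^s(\alpha)$, so $F_n(\alpha)>0$. Continuity of $F_n$ yields a zero $\alpha_n$ in between, and since the required smallness of $\alpha$ tends to zero as $n\to\infty$, one obtains $\alpha_n\to 0^+$. The main technical obstacle is to control the iterated Poincar\'e map uniformly in $n$ and $\alpha$: the normal-form remainder must be dominated along the entire drift trajectory, so that monotone passage and the sign change of $F_n$ survive the higher-order perturbation. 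This uniform control is provided by the saddle-node passage lemma in the form used by \cite{dukov} and going back to \cite{ShilnikovTuraev:2001}, which supplies the asymptotics of $P_\alpha^n(\xi^u(\alpha))$ required to make the above argument rigorous.
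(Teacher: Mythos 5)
Your argument follows the same route as the paper's proof: a cross-section on the semi-stable cycle, the traces $\xi^u(\alpha)<0<\xi^s(\alpha)$ of the outer saddle's unstable manifold and the inner saddle's stable manifold, the fixed-point-free drift of the Poincar\'e iterates for $\alpha>0$, and a continuity/intermediate-value argument in $\alpha$ to close each $n$-turn connection. You merely make explicit what the paper leaves implicit (the saddle-node normal form, the $\pi/\sqrt{\alpha}$ passage-time estimate forcing $\alpha_n\to 0$, and the sign change of $F_n$), so the proposal is correct and essentially identical in approach.
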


\begin{proof}
    For \(\alpha=0\), the vector field contains a semi-stable limit cycle which separates the saddles \(L\) and \(M\). On this cycle we select a cross-section \(\Sigma\) on which we define a coordinate \(x\) directed outwards (see Fig. \ref{fig:cycledetails}). Suppose that the limit cycle and the cross-section intersect at \(x=0\). The orbit approaching the saddle \(L\) departs from the cross-section at \(x_1(0)<0\) and the orbit leaving from \(M\) arrives to the cross-section at \(x_2(0)>0\).

    \begin{figure}[H]
        \centering
        \includegraphics{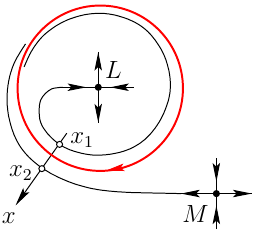}
        \caption{Placing a cross-section on the cycle}
        \label{fig:cycledetails}
    \end{figure}

    When we pick a small \(\alpha>0\) and study the vector field still furnished with the cross-section \(\Sigma\), we see that we get parameter dependent \(x_1(\alpha)<0<x_2(\alpha)\) and that the semi-stable limit cycle is destroyed, so that the parameter-dependent Poincaré map \(P_{\alpha}\) has no fixed points. Therefore, we have that \(P_\alpha(x)<x\) for all \(x \in (x_1(\alpha),x_2(\alpha)]\) and \(P^{-1}_\alpha(x)>x\) for all \(x \in [x_1(\alpha),x_2(\alpha))\). Thus, after enough iterations of the Poincaré map and its inverse, we will find \(n\in \mathbb{N}\) such that  \(P_\alpha^{(n)}(x_2(\alpha))<0\) and \(P_\alpha^{(-n)}(x_1(\alpha))>0\).

    In contrast to this, we always have that \(P^{(n)}_0(x_2(0))>0\) and \(P^{(-n)}_0(x_1(0))<0\). Thus, for any sufficiently big $n$, we will find \(\alpha_n\) such that
    \(P_{\alpha_n}^{(n)}(x_2(\alpha_n))=P_{\alpha_n}^{(-n)}(x_1(\alpha_n))\) implying a heteroclinic connection with more and more turns (see Fig. \ref{fig:flashing}).
\end{proof}

\begin{figure}[H]
  \centering
  \subfloat[No turns]{\includegraphics[width=0.2\textwidth]{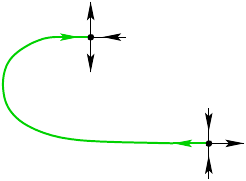}}
  \subfloat[One turn]{\includegraphics[width=0.2\textwidth]{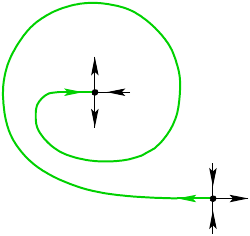}}\\
  \subfloat[Two turns]{\includegraphics[width=0.2\textwidth]{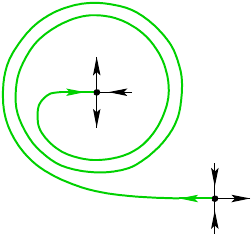}}
  \caption{Heteroclinic connections making an increasing number of turns}
  \label{fig:flashing}
\end{figure}

The occurrence of flashing heteroclinic connections is not limited to situations involving a cyclic fold bifurcation. They may also appear in a similar situation involving the bifurcation of a homoclinic connection. More specifically, we consider the situation in which there exist two saddles and a homoclinic connection at one of the saddles. The other saddle is located inside this saddle. We consider a system depending on the splitting parameter \(\beta\) of the homoclinic connection. Suppose the saddle at which the homoclinic connection exists has saddle index smaller than one, so for \(\beta>0\) the homoclinic connection is broken and no limit cycle is generated.

\begin{theorem}\label{thm:flashingloop}
  Consider a vector field containing two saddles and a homoclinic connection as described above. Then there exists a sequence \((\beta_n)_{n\in\mathbb{N}}\) with \(\beta_n>0\) for all \(n \in \mathbb{N}\) and \(\beta_n\to0\) such that for every \(n\in \mathbb{N}\), the vector field corresponding to the parameter values
  \(\beta_n\) contains a heteroclinic connection making more turns around the inner saddle then the previous one.
\end{theorem}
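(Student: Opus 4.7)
The plan is to mirror the proof of Theorem~\ref{thm:flashingcycle}, replacing the semi-stable limit cycle by the unperturbed homoclinic loop of $L$ as the dividing invariant set. Place a cross-section $\Sigma$ transverse to the loop, and introduce a coordinate $x$ on $\Sigma$ with $x=0$ at the loop's intersection and $x>0$ on the side to which the loop splits for $\beta>0$. Let $x_1(\beta)$ denote the coordinate of the intersection of the stable manifold of $L$ with $\Sigma$ (the one closest to $0$), and $x_2(\beta)$ the coordinate of the first intersection of the unstable manifold of $M$ with $\Sigma$; both depend continuously on $\beta$, with $x_1(0)=0$. Let $P_\beta:\Sigma\to\Sigma$ denote the first-return map obtained by composing the regular global map with the singular correspondence map near $L$.

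For $\beta>0$, the hypothesis that no limit cycle is generated implies that $P_\beta$ has no fixed point on the relevant branch, and therefore $P_\beta(x)<x$ there. Hence the forward iterates $P_\beta^k(x_2(\beta))$ are strictly decreasing and the backward iterates $P_\beta^{-k}(x_1(\beta))$ are strictly increasing in $k$; for $n$ large enough one obtains $P_\beta^n(x_2(\beta))<0$ and $P_\beta^{-n}(x_1(\beta))>0$. At $\beta=0$, by contrast, the loop is invariant, $x=0$ is a fixed point of $P_0$, and no iterate of $x_1(0)$ or $x_2(0)$ crosses $0$. By continuity in $\beta$ of the finite iterates $P_\beta^{\pm n}$, the intermediate value theorem supplies a parameter $\beta_n>0$ at which $P_{\beta_n}^{n}(x_2(\beta_n))=P_{\beta_n}^{-n}(x_1(\beta_n))=0$; equivalently, the forward orbit of $x_2(\beta_n)$ meets the backward orbit of $x_1(\beta_n)$ on $\Sigma$ after $n$ returns, producing a heteroclinic orbit that winds $n$ times around $M$. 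Letting $n$ grow gives the required sequence with $\beta_n\to 0^+$ and strictly increasing winding.

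The main obstacle is the singularity of $P_\beta$ at the boundary of its domain. Because the first return passes near $L$, the singular factor $\xi\mapsto\xi^\lambda$ with $\lambda<1$ produces $P_\beta(x)\sim\beta+c\,x^\lambda$ for small $x>0$, so $P_\beta$ has unbounded derivative as $x\to 0^+$. Two things require particular care: (i) verifying that the inequality $P_\beta(x)<x$ really persists all the way down to the singular boundary, so that the monotone-drift argument is valid on the whole relevant branch, and (ii) checking that the $\pm n$-th iterates remain inside the domain of $P_\beta$, uniformly for $\beta$ small, despite the fact that larger $n$ forces the orbits to pass ever closer to $L$. Both issues reduce to quantitative estimates on the model map~(\ref{eq:nonmonodromicpoincaremap}), as established by \cite{dukov}.
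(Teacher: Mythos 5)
Your argument is correct and follows essentially the same route as the paper's proof: for \(\beta>0\) the absence of fixed points forces \(P_\beta(x)<x\), the iterates of the separatrix drift monotonically across the position of the former loop, while at \(\beta=0\) they never cross it, and continuity in \(\beta\) (IVT) then yields the sequence \(\beta_n\). The only difference is cosmetic: you track both separatrices and iterate forward and backward (mirroring the paper's proof of Theorem~\ref{thm:flashingcycle}), whereas the paper's proof of this theorem iterates only the unstable separatrix of the inner saddle forward against the fixed reference point \(x=0\) on the loop; your added remarks on the singular behaviour of \(P_\beta\) near the boundary address a technicality the paper silently glosses over.
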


\begin{proof}
    The vector field for \(\beta=0\) contains a homoclinic connection at \(M\) and \(L\) lies on the inside of this connection. The orbit leaving \(L\) has the homoclinic connection as its limit set. On this connection we consider a cross-section \(\Sigma\) on which we define a coordinate \(x\) directed inwards (see Fig. \ref{fig:loopdetails}). Suppose that the homoclinic connection (that is, the orbit leaving and approaching \(M\)) and its cross-section intersect at \(x=0\) and that the solution leaving the saddle \(L\) reaches the cross-section at \(x_1(0)>0\).

    \begin{figure}[H]
        \centering
        \includegraphics{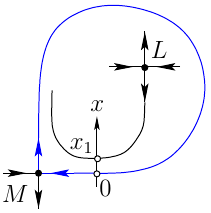}
        \caption{Placing a cross-section on the homoclinic connection}
        \label{fig:loopdetails}
    \end{figure}

    When we pick a small \(\beta>0\) and study the vector field, still furnished with the cross-section \(\Sigma\), we see that we get parameter dependent \(x_1(\beta)>0\) and that the homoclinic connection is destroyed. Moreover, the parameter dependent Poincaré map \(P_{\beta}\) has no fixed points. Therefore, we have that \(P_\beta(x)<x\) so after enough iterations of the Poincaré map we will find \(n\in \mathbb{N}\) such that \(P_\beta^{(n)}(x_1(\beta))<0\).

    In contrast to this, we always have that \(P^{(n)}_0(x_1(0))>0\). Thus, for all sufficiently big $n$, we will find \(\beta_n\) such that \(P_{\beta_n}^{(n)}(x_1(\beta_n))=0\) which corresponds to a heteroclinic connection with more and more turns (see Fig. \ref{fig:flashingloops}).
\end{proof}

\begin{figure}[H]
  \centering
  \subfloat[No turns]{\includegraphics[width=0.2\textwidth]{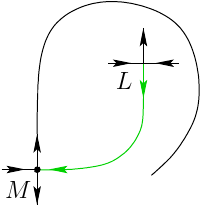}}
  \subfloat[One turn]{\includegraphics[width=0.2\textwidth]{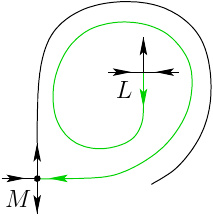}}\\
  \subfloat[Two turns]{\includegraphics[width=0.2\textwidth]{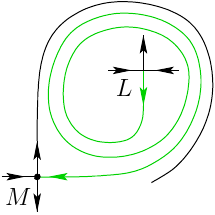}}
  \caption{Heteroclinic connections making an increasing number of turns}
  \label{fig:flashingloops}
\end{figure}

\appendix{Regularity of splitting}\label{app:melnikov}
In this appendix we show that the splitting of the heteroclinic connections in the planar system given by (\ref{eq:perturbedmonodromicvectorfield}) is regular by verifying that the relevant Melnikov integrals are nonzero when evaluated along the heteroclinic connections. An introduction to this technique can be found, e.g., in \cite{Ku:2004}.

Writing a smooth planar system dependent on a parameter $p \in\mathbb{R}$ as
\begin{equation}
\left\{\begin{array}{rcl}
\dot{x}&=&f(x,y,p),\\
\dot{y}&=&g(x,y,p),
\end{array}
\right.
\label{eq:2DappendixB}
\end{equation}
the Melnikov integral becomes
\[M_p(0)=\int_{-\infty}^\infty\varphi(t)\psi(t)dt\, ,\]
where \[\varphi(t)=\exp\left[-\int_0^t\left(\frac{\partial f}{\partial x}+\frac{\partial g}{\partial y}\right)d\tau\right]\] and \[\psi(t)=f\frac{\partial f}{\partial p}-g\frac{\partial f}{\partial p}\] have to be evaluated along the heteroclinic solutions of the system at the critical parameter value $p=0$.

The heteroclinic connection in (\ref{eq:perturbedmonodromicvectorfield}) along \(y=0\) splits under variation of the parameter \(\alpha\). Setting \(p=\alpha\) and \(y=0\) we find that \[\psi(t)=-x(t)^2(1-x(t))^2<0\] for all \(t\in\mathbb{R}\). As \(\varphi(t)>0\) for all \(t\) as well, we conclude that \(M_\alpha(0)\neq0\) independent on the parameters \(a\), \(b\) and \(c\) of (\ref{eq:perturbedmonodromicvectorfield}).

The connection along \(y=x(1-x)\) splits under variation of the parameter \(\varepsilon\) so we set \(p=\varepsilon\) and \(y=x(1-x)\). In this case, we transform the integrals with respect to $t$ to integrals in \(x\) over the domain \([0,1]\), and proceed numerically. We explicitly work out the case \(c=1/2\). The remaining case is similar. Using the first equation in (\ref{eq:perturbedmonodromicvectorfield}), we obtain that \[\psi(t(x))\,dx=x(-2x^2+3x-1)\,dx.\] Similarly, the integrand involved in \(\varphi(t(x))\) becomes \[\frac{5x^2-8x+2}{x^3-5x^2+4x}\,dx.\] Choosing the particular solution satisfying \(x(0)=1/2\) we obtain \[\varphi(t(x))=\exp\left[-\int_{1/2}^x\frac{5\xi^2-8\xi+2}{\xi^3-5\xi^2+4\xi}\,d\xi\right].\] As \(\lim_{t\to\infty}x(t)=0\), the total integral becomes \[M_\varepsilon(0)=-\int_0^1\varphi(t(x))\psi(t(x))dx\]
Numerical evaluation yields \(M_\varepsilon(0)=-0.0177888\). Analogously, we find that \(M_\varepsilon(0)=-0.0866532\) when \(c=3/2\), showing that the splitting happens regularly in both cases.
\par\bigskip
\end{multicols}

\begin{multicols}{2}

\bibliographystyle{ws-ijbc}
\bibliography{bibl}

\end{multicols}

\end{document}